\documentclass{amsart}

\makeatletter
\@namedef{subjclassname@2020}{%
  \textup{2020} Mathematics Subject Classification}
\makeatother

\usepackage{amsthm,amssymb,amsfonts,amsmath,mathtools}
\usepackage{mathrsfs}
\usepackage[T1]{fontenc}
\usepackage{longtable}
\usepackage{array}
\usepackage{booktabs}
\usepackage{hyperref}

\hypersetup{
    colorlinks=true,
    linkcolor=blue,
    filecolor=blue,
    urlcolor=blue,
    linktocpage=true
}

\newtheorem{theorem}{Theorem}[section]

\newtheorem{proposition}[theorem]{Proposition}

\theoremstyle{definition}
\newtheorem{definition}[theorem]{Definition}
\newtheorem{corollary}[theorem]{Corollary}
\newtheorem{remark}[theorem]{Remark}

\theoremstyle{remark}

\numberwithin{equation}{section}
\allowdisplaybreaks

\begin{document}

\title[Normal ordering for double Ore extensions]{Normal Ordering and Stirling-Type Combinatorics for Double Ore Extensions of Type $(14641)$}

%    author one information

\author{Andr\'es Rubiano}
\address{Universidad Distrital Francisco Jos\'e de Caldas}
\curraddr{Campus Universitario}
\email{aarubianos@udistrital.edu.co}
\address{Universidad ECCI}
\curraddr{Campus Universitario}
\email{arubianos@ecci.edu.co}
\thanks{}

\subjclass[2020]{05A15, 05A19, 16S36, 16S38, 16W50, 16E65}

\keywords{Double Ore extensions, Artin--Schelter regular algebras, normal ordering, PBW bases, Stirling numbers, Lah--Whitney coefficients, algebraic combinatorics, SageMath computations.}

\date{}

\dedicatory{Dedicated to Adela Le\'on}

\begin{abstract}
We develop an explicit PBW normal ordering theory for the $26$ double extension regular algebras of type $(14641)$ in the Zhang--Zhang classification. With respect to the order $x_1\prec x_2\prec y_1\prec y_2$, we obtain closed two-letter formulas for the internal relations and recursive coefficient systems for mixed words, products of PBW monomials, powers of normal blocks, and noncommutative multinomial expressions. The internal coefficients are mostly quantum or skew-commutative, while the Jordan families produce Lah--Whitney, hence Stirling-type, triangular arrays. The symbolic reductions are supported by a \textsc{SageMath} implementation included as an ancillary file.
\end{abstract}

\maketitle

%\tableofcontents 

\section{Introduction}

Normal ordering is one of the most efficient ways in which noncommutative algebra produces explicit combinatorics. In its classical form, the problem goes back to the Weyl relation $DX=XD+1$: every word in the operators $X$ and $D$ can be rewritten uniquely as a linear combination of ordered monomials $X^iD^j$, and the coefficients obtained in this process include the Stirling numbers of the second kind. Since then, variants of the same phenomenon have appeared in quantum planes, $q$-Weyl algebras, creation and annihilation operator algebras, Ore extensions and other PBW-type algebras; see, for instance, \cite{Varvak2005,MansourSchork2015,Schork2021,BriandLopesRosas2020,MansourSchork2023}.

The common mechanism behind these examples is a rewriting system. One fixes an admissible order on the generators and repeatedly eliminates the forbidden adjacent pairs until an ordered expression is obtained. When the defining relations give a PBW basis, the normal form is unique; equivalently, the reduction process produces well-defined normal ordering coefficients. Bergman's Diamond Lemma \cite{Bergman1978} provides the conceptual background for this viewpoint, while concrete PBW computations turn the abstract confluence principle into explicit recurrences and coefficient arrays.

This rewriting philosophy was recently used by Reyes and the author in the normal ordering study of the $3$-dimensional skew polynomial rings classified by Bell and Smith \cite{BellSmith1990,RubianoReyes2026Combinatorial3D}. The present paper should be viewed as a four-generator continuation of that program: the PBW basis is now indexed by four exponents, and the reduction data contain two internal systems together with four mixed crossing rules.

Ore extensions, introduced by Ore in \cite{Ore1931,Ore1933}, form one of the fundamental sources of such examples. If $R$ is a ring, $\sigma$ an endomorphism of $R$, and $\delta$ a $\sigma$-derivation, the Ore extension $R[x;\sigma,\delta]$ is generated by $R$ and an element $x$ subject to $xr=\sigma(r)x+\delta(r)$. Normal ordering in Ore-type algebras is already rich: even when there are only two generators, changing the commutation rule changes the associated coefficient arrays from ordinary Stirling numbers to $q$-analogues, rook numbers, Whitney-type numbers and other generalized triangular families.

The purpose of this paper is to develop the corresponding normal ordering theory for a substantially larger class: double extension regular algebras of type $(14641)$. Double Ore extensions were introduced by Zhang and Zhang \cite{ZhangZhang2008,ZhangZhang2009} as a two-generator analogue of Ore extensions. In the trimmed case, a double extension over a two-generator Artin--Schelter regular algebra has generators $x_1,x_2,y_1,y_2$ and defining relations of the form
\[
x_2x_1=q_{12}x_1x_2+q_{11}x_1^2,
\
 y_2y_1=p_{12}y_1y_2+p_{11}y_1^2,
\]
together with four mixed relations
\[
y_ix_s=\sum_{j=1}^2\sum_{t=1}^2 a_{ijst}x_ty_j,
\ i,s\in\{1,2\}.
\]
Thus, after fixing the order
\[
x_1\prec x_2\prec y_1\prec y_2,
\]
the forbidden adjacent pairs are $x_2x_1$, $y_2y_1$ and $y_ix_s$. This is the first point at which the double extension setting becomes genuinely different from the usual two-generator normal ordering problems: there are two internal systems and four mixed crossing rules, so the coefficient arrays must record several interacting layers of reductions.

The families considered here are the $26$ double extension regular algebras of type $(14641)$ classified by Zhang and Zhang \cite{ZhangZhang2009}. These algebras are Artin--Schelter regular algebras of global dimension four and are related to the general classification program for noncommutative analogues of polynomial algebras; see \cite{ArtinSchelter1987,Rogalski2023}. The compatibility between the double extension data and iterated Ore presentations was clarified by Carvalho, Lopes and Matczuk \cite{Carvalhoetal2011}. In the present work we do not reprove the classification. Instead, we take the explicit Zhang--Zhang families as input and study their algebraic combinatorics.

Our first result is a closed formula for the two internal reductions. If
\[
x_2x_1=qx_1x_2+rx_1^2,
\]
then
\[
x_2^nx_1^m
=
\sum_{k=0}^{n}
 r^kq^{m(n-k)}
\begin{bmatrix} n\\ k \end{bmatrix}_{q}
[m]_{q}^{\overline{k}}
 x_1^{m+k}x_2^{n-k}.
\]
The same formula holds for the $y$-variables after replacing $(q,r)$ by the corresponding parameters $(p,s)$. This already separates the $26$ families into two regimes. Most internal systems are quantum or skew-commutative and therefore produce scalar factors. By contrast, the Jordan cases $\mathbb{A}$ and $\mathbb{H}$ produce the triangular coefficients
\[
\binom{n}{k}m^{\overline{k}},
\]
which are Lah--Whitney coefficients and may be viewed as homogeneous relatives of the Stirling numbers arising in the Weyl algebra.

The second part of the paper develops explicit mixed normal ordering. For each family $\bar{\mathbb{F}}$ we introduce crossing kernels
\[
\mathscr{K}_{i,s}^{\bar{\mathbb{F}}}
\]
which list all summands in the normal form of $y_ix_s$. These kernels are given explicitly for all $26$ families. Using them, we define coefficient arrays
\[
W_{\bar{\mathbb{F}},i}^{(a,b)}(\alpha,\beta;\ell)
\]
for the normal ordering of $y_ix_1^ax_2^b$, and arrays
\[
\Theta_{\bar{\mathbb{F}}}(a,b;c,d;\alpha,\beta;\gamma,\delta)
\]
for the normal ordering of $y_1^cy_2^dx_1^ax_2^b$. These arrays are then used to obtain explicit structure constants for products of PBW monomials and recursive formulas for powers of normal blocks
\[
(x_1^ax_2^by_1^cy_2^d)^N
\]
and for powers of arbitrary linear combinations
\[
\lambda_1x_1+\lambda_2x_2+\mu_1y_1+\mu_2y_2.
\]
In this way, the paper gives a complete recursive normal ordering solution for every family in $\mathcal{LIST}$.

Several computations were verified using \textsc{SageMath} \cite{SageMath2026}. The implementation represents noncommutative polynomials as finite dictionaries of words and coefficients, applies the PBW rewriting rules, and computes normal forms, commutators, products, powers and centrality systems in fixed bidegrees. Since the code is too long to include in the body of the article, it is provided as the ancillary file \texttt{double\_ore\_pbw.sage} with the arXiv version of this manuscript.

The article is organized as follows. Section \ref{Preliminaries} recalls the necessary background on double extensions, the type $(14641)$ families, PBW bases and normal ordering. Section \ref{SectionNormalOrderingDOE} contains the main normal ordering results: internal formulas, explicit crossing kernels for the $26$ families, mixed coefficient recursions, structure constants, block powers and noncommutative multinomial recurrences. Section \ref{SubsectionComputationalImplementation} explains the computational implementation. Section \ref{Futurework} discusses future directions.

Throughout the paper, $\bar{\mathbb{N}}$ denotes the set of natural numbers including zero. The word \emph{ring} means an associative ring with identity, not necessarily commutative. All vector spaces and algebras are over a fixed field $\Bbbk$, which is assumed to be algebraically closed and of characteristic zero. Unless otherwise stated, all algebras are associative and unital.
\section{Preliminaries}\label{Preliminaries}

In this section we fix the notation and terminology used throughout the paper. We first recall the basic language of double Ore extensions, then specialize to double extensions over Artin--Schelter regular algebras of global dimension two, and finally introduce the normal ordering convention that will be used in the combinatorial computations.

\subsection{Double Ore extensions}\label{SubsectionDoubleOreExtensions}

We begin by recalling the notion of double extension introduced by Zhang and Zhang \cite{ZhangZhang2008}. We use the corrected formulation of the compatibility conditions given by Carvalho et al. \cite{Carvalhoetal2011}.

\begin{definition}[{\cite[Definition 1.3]{ZhangZhang2008}; \cite[Definition 1.1]{Carvalhoetal2011}}]\label{DefinitionDoubleExtension}
Let $R$ be a subalgebra of a $\Bbbk$-algebra $B$.
\begin{enumerate}
    \item[\rm (a)] We say that $B$ is a \emph{right double extension} of $R$ if the following conditions hold:
    \begin{itemize}
        \item[\rm (i)] $B$ is generated by $R$ and two elements $y_1,y_2$;
        \item[\rm (ii)] the generators $y_1,y_2$ satisfy a quadratic relation of the form
        \begin{equation}\label{RelationY2Y1General}
        y_2y_1
        =
        p_{12}y_1y_2+p_{11}y_1^2+\tau_1y_1+\tau_2y_2+\tau_0,
        \end{equation}
        where $p_{12},p_{11}\in\Bbbk$ and $\tau_0,\tau_1,\tau_2\in R$;
        \item[\rm (iii)] $B$ is a free left $R$-module with basis
        \[
        \{y_1^iy_2^j\mid i,j\in\bar{\mathbb{N}}\};
        \]
        \item[\rm (iv)] one has
        \[
        y_1R+y_2R+R\subseteq Ry_1+Ry_2+R.
        \]
    \end{itemize}

    \item[\rm (b)] A right double extension $B$ of $R$ is called a \emph{double extension} of $R$ if, in addition,
    \begin{itemize}
        \item[\rm (i)] $p_{12}\neq 0$;
        \item[\rm (ii)] $B$ is a free right $R$-module with basis
        \[
        \{y_2^iy_1^j\mid i,j\in\bar{\mathbb{N}}\};
        \]
        \item[\rm (iii)] one has
        \[
        y_1R+y_2R+R=Ry_1+Ry_2+R.
        \]
    \end{itemize}
\end{enumerate}
\end{definition}

Condition \ref{DefinitionDoubleExtension}(a)(iv) is equivalent to the existence of maps
\[
\sigma=
\begin{bmatrix}
\sigma_{11} & \sigma_{12}\\
\sigma_{21} & \sigma_{22}
\end{bmatrix}:R\to M_{2\times 2}(R),
\
\delta=
\begin{bmatrix}
\delta_1\\
\delta_2
\end{bmatrix}:R\to M_{2\times 1}(R),
\]
such that
\begin{equation}\label{MatrixRelationDoubleExtension}
\begin{bmatrix}
y_1\\
y_2
\end{bmatrix}r
=
\sigma(r)
\begin{bmatrix}
y_1\\
y_2
\end{bmatrix}
+
\delta(r),
\ r\in R.
\end{equation}
When this happens, we write
\[
B=R_P[y_1,y_2;\sigma,\delta,\tau],
\]
where $P=(p_{12},p_{11})\in\Bbbk^2$ and $\tau=\{\tau_0,\tau_1,\tau_2\}\subseteq R$. The collection $\{P,\sigma,\delta,\tau\}$ will be called the \emph{DE-data} of $B$.

The special case in which $\delta=0$ and $\tau=\{0,0,0\}$ is particularly important for us.

\begin{definition}\label{DefinitionTrimmedDoubleExtension}
A right double extension $R_P[y_1,y_2;\sigma,\delta,\tau]$ is called \emph{trimmed} if $\delta=0$ and $\tau=\{0,0,0\}$. In this case we write simply
\[
R_P[y_1,y_2;\sigma].
\]
Thus a trimmed double extension is generated by $R,y_1,y_2$ subject to
\[
y_2y_1=p_{12}y_1y_2+p_{11}y_1^2
\]
and
\[
\begin{bmatrix}
y_1\\
y_2
\end{bmatrix}r
=
\sigma(r)
\begin{bmatrix}
y_1\\
y_2
\end{bmatrix},
\ r\in R.
\]
\end{definition}

For a right double extension $R_P[y_1,y_2;\sigma,\delta,\tau]$, the map $\sigma:R\to M_{2\times 2}(R)$ is an algebra homomorphism and $\delta$ is a $\sigma$-derivation, that is,
\[
\delta(rr')=\sigma(r)\delta(r')+\delta(r)r',
\ r,r'\in R.
\]
In particular, in the trimmed case the interaction between $R$ and the new variables $y_1,y_2$ is completely governed by the entries of the matrix $\sigma$.

\begin{remark}\label{RemarkDoubleVsIteratedOre}
Double extensions should not be confused with two-step iterated Ore extensions. Although many double extensions can be written as iterated Ore extensions, this is not automatic. The obstruction comes from the fact that an iterated Ore extension need not admit a quadratic relation of the form \eqref{RelationY2Y1General}. Conversely, a double extension may fail to be an iterated Ore extension. Criteria deciding when a double extension admits an iterated Ore presentation were obtained in \cite{Carvalhoetal2011}.
\end{remark}

\subsection{Regular algebras of type \texorpdfstring{$(14641)$}{(14641)}}\label{SubsectionType14641}

We recall the class of double extension regular algebras that will be considered in this paper. Let $B$ be a connected graded algebra generated in degree one. If the trivial right module $\Bbbk_B$ has a minimal projective resolution of the form
\begin{equation}\label{Resolution14641}
0
\longrightarrow B(-4)
\longrightarrow B(-3)^{\oplus 4}
\longrightarrow B(-2)^{\oplus 6}
\longrightarrow B(-1)^{\oplus 4}
\longrightarrow B
\longrightarrow \Bbbk_B
\longrightarrow 0,
\end{equation}
then $B$ is said to be \emph{of type} $(14641)$.

Zhang and Zhang \cite{ZhangZhang2009} proved that, under suitable hypotheses, double extensions over Artin--Schelter regular algebras of global dimension two produce Artin--Schelter regular algebras of global dimension four of type $(14641)$. More precisely, if $R$ is an Artin--Schelter regular algebra of global dimension two and
\[
B=R_P[y_1,y_2;\sigma]
\]
is a connected graded double extension generated in degree one, then $B$ is strongly Noetherian, Auslander regular, Cohen--Macaulay, Koszul, and of type $(14641)$; see \cite[Theorem 0.1]{ZhangZhang2009}.

The classification in \cite{ZhangZhang2009} gives $26$ families of such trimmed double extensions, which are labeled
\[
\mathbb{A},\mathbb{B},\dots,\mathbb{Z}.
\]
Following the notation used in the literature, we denote by $\mathcal{LIST}$ the class of all algebras belonging to these families.

The families in $\mathcal{LIST}$ have been studied from several complementary viewpoints. Reyes and the author proved that double extension regular algebras of type $(14641)$ are not differentially smooth in the sense of noncommutative differential geometry \cite{RubianoReyes2024Smooth}. Herrera, Higuera and the author computed finite Gr\"obner--Shirshov bases for several such families and showed that they admit PBW bases by an algorithmic method \cite{HerreraHigueraRubiano2025GS}. The center and certain central subalgebras of selected families were computed by the author, with applications to cancellation phenomena \cite{Rubiano2026Center}. Explicit homological invariants of graded double Ore extensions have also been investigated in \cite{Rubiano2026HomologicalInvariants}. The present paper adds a combinatorial layer to this line of work by extracting normal ordering recurrences and coefficient arrays from the same PBW structure.

\subsection{Double extensions over \texorpdfstring{$\Bbbk_Q[x_1,x_2]$}{kQ[x1,x2]}}\label{SubsectionDoubleExtensionsKQ}

Let $Q=(q_{12},q_{11})\in\Bbbk^2$ with $q_{12}\neq 0$. We write
\begin{equation}\label{DefinitionKQ}
\Bbbk_Q[x_1,x_2]
:=
\Bbbk\{x_1,x_2\}\big/
\left\langle
x_2x_1-q_{12}x_1x_2-q_{11}x_1^2
\right\rangle.
\end{equation}
Thus, in $\Bbbk_Q[x_1,x_2]$ one has
\begin{equation}\label{RelationX2X1}
x_2x_1=q_{12}x_1x_2+q_{11}x_1^2.
\end{equation}

The Artin--Schelter regular algebras of global dimension two are, up to isomorphism, the Manin plane and the Jordan plane \cite[Theorem 1.4]{Shirikov2005}. In the present notation, these correspond respectively to
\[
Q=(q,0),\ q\in\Bbbk^\times,
\]
and
\[
Q=(1,1).
\]
Hence the notation $\Bbbk_Q[x_1,x_2]$ allows us to treat both cases simultaneously.

Let
\[
B=(\Bbbk_Q[x_1,x_2])_P[y_1,y_2;\sigma]
\]
be a trimmed double extension, where $P=(p_{12},p_{11})$ with $p_{12}\neq 0$. Then $B$ is generated by $x_1,x_2,y_1,y_2$ and has two non-mixing quadratic relations:
\begin{align}
x_2x_1 &= q_{12}x_1x_2+q_{11}x_1^2, \label{NonMixingXRelation}\\
y_2y_1 &= p_{12}y_1y_2+p_{11}y_1^2. \label{NonMixingYRelation}
\end{align}

The mixed relations are encoded by the algebra homomorphism
\[
\sigma:\Bbbk_Q[x_1,x_2]\to M_{2\times 2}(\Bbbk_Q[x_1,x_2]).
\]
Since we work in the connected graded case, $\sigma$ is determined by its values on $x_1$ and $x_2$. We write
\begin{equation}\label{SigmaCoordinates}
\sigma_{ij}(x_s)=\sum_{t=1}^2 a_{ijst}x_t,
\ i,j,s\in\{1,2\},
\end{equation}
where $a_{ijst}\in\Bbbk$. Thus, for $i,s\in\{1,2\}$,
\begin{equation}\label{GeneralMixedRelation}
y_ix_s
=
\sum_{j=1}^2\sum_{t=1}^2 a_{ijst}x_ty_j.
\end{equation}
Equivalently, the four mixed relations are
\begin{align}
y_1x_1
&=
a_{1111}x_1y_1+a_{1112}x_2y_1
+a_{1211}x_1y_2+a_{1212}x_2y_2, \label{MixedRelation11}\\
y_1x_2
&=
a_{1121}x_1y_1+a_{1122}x_2y_1
+a_{1221}x_1y_2+a_{1222}x_2y_2, \label{MixedRelation12}\\
y_2x_1
&=
a_{2111}x_1y_1+a_{2112}x_2y_1
+a_{2211}x_1y_2+a_{2212}x_2y_2, \label{MixedRelation21}\\
y_2x_2
&=
a_{2121}x_1y_1+a_{2122}x_2y_1
+a_{2221}x_1y_2+a_{2222}x_2y_2. \label{MixedRelation22}
\end{align}

For later reference, we also introduce the matrices
\begin{equation}\label{SigmaMatrices}
\Sigma_{ij}:=
\begin{bmatrix}
a_{ij11} & a_{ij12}\\
a_{ij21} & a_{ij22}
\end{bmatrix},
\qquad
\Sigma:=
\begin{bmatrix}
\Sigma_{11} & \Sigma_{12}\\
\Sigma_{21} & \Sigma_{22}
\end{bmatrix}.
\end{equation}
Explicitly,
\[
\Sigma=
\begin{bmatrix}
a_{1111} & a_{1112} & a_{1211} & a_{1212}\\
a_{1121} & a_{1122} & a_{1221} & a_{1222}\\
a_{2111} & a_{2112} & a_{2211} & a_{2212}\\
a_{2121} & a_{2122} & a_{2221} & a_{2222}
\end{bmatrix}.
\]
The entries of $\Sigma$ are subject to the compatibility equations ensuring that \eqref{NonMixingXRelation}, \eqref{NonMixingYRelation}, and \eqref{MixedRelation11}--\eqref{MixedRelation22} define a double extension. Following \cite{ZhangZhang2009}, such a matrix will be called a $C$-\emph{solution} whenever it satisfies those compatibility equations and ${\rm det}\,\Sigma\neq 0$.

\begin{remark}\label{RemarkCSolutions}
In the computations below we do not need the full parameter-dependent form of the compatibility system. Instead, for each algebra in $\mathcal{LIST}$ we use the explicit relations defining that family. The role of the $C$-solution condition is to guarantee that the displayed relations determine a double extension and, consequently, that the PBW normal forms described below are well defined.
\end{remark}

\subsection{PBW bases and normal ordering}\label{SubsectionPBWNormalOrdering}

The combinatorial problem studied in this paper is the problem of expressing arbitrary words in the generators $x_1,x_2,y_1,y_2$ in PBW normal form.

We fix throughout the monomial order
\begin{equation}\label{PBWOrder}
x_1\prec x_2\prec y_1\prec y_2.
\end{equation}
With this order, a monomial is said to be \emph{normal ordered} if it has the form
\[
x_1^ax_2^by_1^cy_2^d,
\ a,b,c,d\in\bar{\mathbb{N}}.
\]

\begin{proposition}\label{PropositionPBWBasis}
Let
\[
B=(\Bbbk_Q[x_1,x_2])_P[y_1,y_2;\sigma]
\]
be a trimmed double extension. Then the set
\[
\mathcal{B}
:=
\left\{
x_1^ax_2^by_1^cy_2^d
\mid
a,b,c,d\in\bar{\mathbb{N}}
\right\}
\]
is a $\Bbbk$-basis of $B$.
\end{proposition}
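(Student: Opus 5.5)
The plan is to combine the module-theoretic freeness in Definition \ref{DefinitionDoubleExtension} with the PBW basis of the base ring $R=\Bbbk_Q[x_1,x_2]$. Since $B$ is a trimmed double extension, condition (a)(iii) of Definition \ref{DefinitionDoubleExtension} says that $B$ is a free left $R$-module with basis $\{y_1^cy_2^d\mid c,d\in\bar{\mathbb{N}}\}$. Every element of $B$ can therefore be written uniquely as $\sum_{c,d} r_{cd}\,y_1^cy_2^d$ with $r_{cd}\in R$, almost all zero. It then suffices to show that $\{x_1^ax_2^b\mid a,b\in\bar{\mathbb{N}}\}$ is a $\Bbbk$-basis of $R$. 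Indeed, if each $r_{cd}$ is expanded uniquely in that basis, every element of $B$ becomes a unique finite $\Bbbk$-combination of the monomials $x_1^ax_2^by_1^cy_2^d$. Uniqueness holds because a $\Bbbk$-linear relation among these monomials regroups as $\sum_{c,d}\bigl(\sum_{a,b}\lambda_{abcd}x_1^ax_2^b\bigr)y_1^cy_2^d=0$. Left $R$-freeness forces each inner sum to vanish, and the basis property of $R$ then forces every $\lambda_{abcd}=0$.

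For the base ring I would use Bergman's Diamond Lemma \cite{Bergman1978}, applied to the single reduction rule $x_2x_1\mapsto q_{12}x_1x_2+q_{11}x_1^2$. This rule is compatible with the deg-lex order refining $x_1\prec x_2$: total degree is preserved, and both $x_1x_2$ and $x_1^2$ are smaller than $x_2x_1$. That order is a semigroup partial order satisfying the descending chain condition. The only leading word is $x_2x_1$, and it has no self-overlap ($x_2x_1$ cannot overlap itself since it begins with $x_2$ and ends with $x_1$) and no inclusion ambiguities. All ambiguities are therefore trivially resolvable. The irreducible words, namely those avoiding the subword $x_2x_1$, are exactly $x_1^ax_2^b$, so these form a basis of $R$. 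Alternatively, I could cite that the Manin and Jordan planes are iterated Ore extensions $\Bbbk[x_1][x_2;\sigma,\delta]$, where $\sigma(x_1)=q_{12}x_1$ and $\delta(x_1)=q_{11}x_1^2$.

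The main obstacle is making sure the hypotheses of Definition \ref{DefinitionDoubleExtension} are actually in force, that is, that the given presentation really is a double extension. This is the role of the $C$-solution condition in Remark \ref{RemarkCSolutions}, and I will take it as given from \cite{ZhangZhang2009,Carvalhoetal2011}. Once it is granted, the argument above is complete. If one instead wanted to argue directly from the presentation, the proof would run the Diamond Lemma on the full rewriting system: the six rules $x_2x_1$, $y_2y_1$ and $y_ix_s$ under the order $x_1\prec x_2\prec y_1\prec y_2$. The overlap ambiguities $y_2y_1x_s$ and $y_ix_2x_1$ would then have to be resolved. Their resolvability is precisely the compatibility system, which I would not reprove here. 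That route is strictly harder than the freeness argument, so I would present the freeness argument.
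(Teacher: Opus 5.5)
Your proposal is correct and follows the same route as the paper: left freeness of $B$ over $\Bbbk_Q[x_1,x_2]$ on $\{y_1^cy_2^d\}$, from condition (a)(iii) of the definition, combined with the basis $\{x_1^ax_2^b\}$ of the base ring. You go slightly further by justifying the base-ring basis with the Diamond Lemma, which the paper takes as known, and by writing out the spanning and uniqueness regrouping explicitly; both additions are correct.
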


\begin{proof}
By definition, $\Bbbk_Q[x_1,x_2]$ has PBW basis
\[
\{x_1^ax_2^b\mid a,b\in\bar{\mathbb{N}}\}.
\]
Since $B$ is a right double extension of $\Bbbk_Q[x_1,x_2]$, it is a free left $\Bbbk_Q[x_1,x_2]$-module with basis
\[
\{y_1^cy_2^d\mid c,d\in\bar{\mathbb{N}}\}.
\]
Therefore every element of $B$ can be written uniquely as a finite $\Bbbk$-linear combination of monomials
\[
x_1^ax_2^by_1^cy_2^d,
\]
and the result follows.
\end{proof}

The normal ordering procedure is the rewriting process that eliminates all adjacent pairs that are not compatible with \eqref{PBWOrder}. Thus the forbidden adjacent pairs are
\[
x_2x_1,\ y_2y_1,\ y_ix_s
\text{ for } i,s\in\{1,2\}.
\]
They are rewritten using \eqref{NonMixingXRelation}, \eqref{NonMixingYRelation}, and \eqref{GeneralMixedRelation}, respectively:
\begin{align*}
x_2x_1
&\longmapsto
q_{12}x_1x_2+q_{11}x_1^2,\\
y_2y_1
&\longmapsto
p_{12}y_1y_2+p_{11}y_1^2,\\
y_ix_s
&\longmapsto
\sum_{j=1}^2\sum_{t=1}^2 a_{ijst}x_ty_j.
\end{align*}

\begin{definition}\label{DefinitionNormalForm}
Let $\omega$ be a word in the alphabet $\{x_1,x_2,y_1,y_2\}$. The \emph{normal ordered form} of $\omega$, denoted by ${\rm NF}(\omega)$, is its unique expansion in the PBW basis $\mathcal{B}$:
\begin{equation}\label{NormalFormExpansion}
{\rm NF}(\omega)
=
\sum_{a,b,c,d\in\bar{\mathbb{N}}}
C_{a,b,c,d}(\omega)\,
x_1^ax_2^by_1^cy_2^d,
\end{equation}
where only finitely many coefficients $C_{a,b,c,d}(\omega)\in\Bbbk$ are nonzero. The scalars $C_{a,b,c,d}(\omega)$ are called the \emph{normal ordering coefficients} of $\omega$.
\end{definition}

More generally, if $f$ is a finite $\Bbbk$-linear combination of words, then ${\rm NF}(f)$ is defined by extending \eqref{NormalFormExpansion} linearly.

\begin{remark}\label{RemarkNormalOrdering}
The existence and uniqueness of ${\rm NF}(\omega)$ follow from Proposition \ref{PropositionPBWBasis}. In practice, ${\rm NF}(\omega)$ is obtained by repeatedly replacing forbidden adjacent pairs by the corresponding right-hand sides above. Each replacement moves the word closer to the order $x_1\prec x_2\prec y_1\prec y_2$, and the PBW property guarantees that the final result is independent of the chosen sequence of reductions.
\end{remark}

The basic objects of study in the next sections will be normal ordering identities for words such as
\[
x_2^nx_1^m,\ y_2^ny_1^m,\ y_ix_1^ax_2^b,
\
\left(x_1^ax_2^by_1^cy_2^d\right)^N,
\]
and for noncommutative multinomial-type expressions. These identities will be expressed through explicit coefficient families satisfying recursive relations.

\subsection{Combinatorial notation}\label{SubsectionCombinatorialNotation}

We collect here the elementary notation used in the normal ordering formulas. We use the convention that empty sums are equal to $0$ and empty products are equal to $1$. The Kronecker delta is denoted by $\delta_{i,j}$. Unless otherwise stated, every coefficient family indexed outside its natural range is understood to be zero.

For $n,k\in\bar{\mathbb{N}}$, the usual binomial coefficient is denoted by
\[
\binom{n}{k}
=
\frac{n!}{k!(n-k)!}
\]
for $0\le k\le n$, and we set $\binom{n}{k}=0$ if $k>n$.

\begin{definition}\label{DefinitionQIntegers}
Let $q\in\Bbbk^\times$ and $n\in\bar{\mathbb{N}}$. The $q$-\emph{integer} $[n]_q$ is defined by
\[
[n]_q:=1+q+\cdots+q^{n-1},
\ [0]_q:=0.
\]
The corresponding $q$-\emph{factorial} is
\[
[n]_q!:=\prod_{j=1}^n [j]_q,
\ [0]_q!:=1.
\]
The \emph{Gaussian binomial coefficient} is the polynomial in $q$ characterized by
\[
\begin{bmatrix}
0\\
0
\end{bmatrix}_q=1,\
\begin{bmatrix}
n\\
k
\end{bmatrix}_q=0
\text{ if }k<0\text{ or }k>n,
\]
and, for $n\ge0$,
\[
\begin{bmatrix}
n+1\\
k
\end{bmatrix}_q
=
\begin{bmatrix}
n\\
k-1
\end{bmatrix}_q
+
q^k
\begin{bmatrix}
n\\
k
\end{bmatrix}_q.
\]
Equivalently, whenever the quotient expression is defined, one has
\[
\begin{bmatrix}
n\\
k
\end{bmatrix}_q
=
\frac{[n]_q!}{[n-k]_q![k]_q!}.
\]
\end{definition}

\begin{definition}\label{DefinitionQRisingFactorial}
Let $q\in\Bbbk^\times$ and $m,k\in\bar{\mathbb{N}}$. The $q$-\emph{rising factorial} is
\[
[m]_{q}^{\overline{k}}
:=
\prod_{j=0}^{k-1}[m+j]_q,
\
[m]_{q}^{\overline{0}}:=1.
\]
When $q=1$, this becomes the usual rising factorial
\[
[m]_1^{\overline{k}}
=
m(m+1)\cdots(m+k-1).
\]
\end{definition}

\begin{definition}\label{DefinitionLambdaCoefficients}
Let $q\in\Bbbk^\times$ and $r\in\Bbbk$. For $n,m,k\in\bar{\mathbb{N}}$, define
\[
\Lambda_{q,r}(n,m;k)
:=
r^kq^{m(n-k)}
\begin{bmatrix}
n\\
k
\end{bmatrix}_{q}
[m]_{q}^{\overline{k}},
\]
with the convention that $\Lambda_{q,r}(n,m;k)=0$ whenever $k>n$.
\end{definition}

\begin{remark}\label{RemarkCoefficientFamilies}
The coefficient families introduced below are local to the corresponding PBW expansions. The symbols
\[
W_{\bar{\mathbb{F}},i}^{(a,b)},\
\Theta_{\bar{\mathbb{F}}},\
\mathfrak{m}_{\bar{\mathbb{F}}},\
R_{\bar{\mathbb{F}},N}^{(a,b,c,d)},\
E_{\bar{\mathbb{F}},N}^{L}
\]
will respectively encode one-letter mixed reductions, mixed block reductions, products of PBW monomials, powers of normal blocks, and powers of noncommutative linear forms. Each family is defined together with its initial values and recursion.
\end{remark}

\section{Normal ordering identities for double Ore extensions of type \texorpdfstring{$(14641)$}{(14641)}}\label{SectionNormalOrderingDOE}

In this section we develop the normal ordering machinery for the $26$ families of double Ore extensions of type $(14641)$ in the Zhang--Zhang classification. The guiding principle is the following: once the PBW order
\[
x_1\prec x_2\prec y_1\prec y_2
\]
is fixed, every normal ordering problem is reduced to eliminating the forbidden adjacent pairs
\[
x_2x_1,\ y_2y_1,\ y_ix_s \text{ for } i,s\in\{1,2\}.
\]
Thus the computations below are governed by two internal two-letter systems, namely the $x$-system and the $y$-system, together with the four mixed relations. The internal systems already produce nontrivial combinatorial arrays, including $q$-binomial coefficients and, in the Jordan cases, Lah--Whitney type coefficients which play the role of homogeneous analogues of Stirling numbers in this setting.

With the notation fixed in Subsection \ref{SubsectionCombinatorialNotation}, the first result gives the closed normal ordering formulas for the two internal quadratic relations. This is the basic two-letter identity from which the rest of the section is built.

\begin{proposition}\label{PropositionInternalNormalOrdering}
Let $q,r,p,s\in\Bbbk$ with $q,p\in\Bbbk^\times$. Assume that
\[
x_2x_1=qx_1x_2+rx_1^2,
\qquad
y_2y_1=py_1y_2+sy_1^2.
\]
Then, for all $m,n\in\bar{\mathbb{N}}$, the following identities hold:
\begin{align}
x_2^nx_1^m
&=
\sum_{k=0}^{n}
\Lambda_{q,r}(n,m;k)\,
x_1^{m+k}x_2^{n-k}, \label{EquationXInternalNormalOrdering}\\
y_2^ny_1^m
&=
\sum_{k=0}^{n}
\Lambda_{p,s}(n,m;k)\,
y_1^{m+k}y_2^{n-k}. \label{EquationYInternalNormalOrdering}
\end{align}
\end{proposition}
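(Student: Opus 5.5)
The plan is a double induction: first on $m$ with $n=1$, then on $n$ with $m$ arbitrary. By symmetry it suffices to prove \eqref{EquationXInternalNormalOrdering}. The $y$-identity \eqref{EquationYInternalNormalOrdering} is the same computation with $(x_1,x_2,q,r)$ replaced by $(y_1,y_2,p,s)$, since only the single relation $x_2x_1=qx_1x_2+rx_1^2$ is used. All manipulations take place in the algebra, and uniqueness of the resulting expansion is guaranteed by Proposition \ref{PropositionPBWBasis}. We adopt the conventions $r^0=1$ and $\Lambda_{q,r}(n,m;k)=0$ for $k<0$ or $k>n$.

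\textbf{Step 1 (the case $n=1$).} We show
\[
x_2x_1^m=q^mx_1^mx_2+r[m]_qx_1^{m+1}
\]
by induction on $m$. The case $m=0$ is trivial, since $[0]_q=0$. For the inductive step, multiply the identity on the right by $x_1$ and apply the defining relation to the factor $x_2x_1$. This gives
\[
q^{m+1}x_1^{m+1}x_2+r\,(q^m+[m]_q)\,x_1^{m+2},
\]
and $q^m+[m]_q=[m+1]_q$. This agrees with $\Lambda_{q,r}(1,m;0)=q^m$ and $\Lambda_{q,r}(1,m;1)=r[m]_q$.

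\textbf{Step 2 (induction on $n$).} We multiply the formula for $x_2^nx_1^m$ on the left by $x_2$. We then apply Step 1 to each product $x_2x_1^{m+k}$, which turns $x_2\,x_1^{m+k}x_2^{n-k}$ into
\[
q^{m+k}x_1^{m+k}x_2^{n+1-k}+r[m+k]_qx_1^{m+k+1}x_2^{n-k}.
\]
Collecting the coefficient of $x_1^{m+k}x_2^{n+1-k}$, it remains to check
\[
q^{m+k}\Lambda_{q,r}(n,m;k)+r[m+k-1]_q\,\Lambda_{q,r}(n,m;k-1)=\Lambda_{q,r}(n+1,m;k).
\]

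\textbf{Step 3 (the coefficient identity).} After factoring out the common factor $r^kq^{m(n+1-k)}[m]_q^{\overline{k}}$, the identity is handled as follows. In the second term we use $[m]_q^{\overline{k-1}}[m+k-1]_q=[m]_q^{\overline{k}}$, and the exponents combine as $m(n-k)+m+k=m(n+1-k)+k$. The required equality then reduces exactly to the recursion in Definition \ref{DefinitionQIntegers}:
\[
\begin{bmatrix} n+1\\ k\end{bmatrix}_q=\begin{bmatrix} n\\ k-1\end{bmatrix}_q+q^k\begin{bmatrix} n\\ k\end{bmatrix}_q.
\]
This is the one spot that needs care. We multiplied on the \emph{left} by $x_2$ precisely so that the extra power $q^k$ attaches to $\left[\begin{smallmatrix} n\\ k\end{smallmatrix}\right]_q$, matching the form of Pascal's rule as stated. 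The boundary cases $k=0$ and $k=n+1$ follow from the zero conventions.
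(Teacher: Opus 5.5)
Your proof is correct and follows the paper's argument: the same induction on $m$ for $x_2x_1^m=q^mx_1^mx_2+r[m]_qx_1^{m+1}$, and the same left multiplication by $x_2$ giving the recurrence $C_{n+1,m}(k)=q^{m+k}C_{n,m}(k)+r[m+k-1]_qC_{n,m}(k-1)$. Your Step 3 carries out, via the $q$-Pascal rule of Definition \ref{DefinitionQIntegers}, the ``direct verification'' that the paper leaves implicit, and that check is correct.
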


\begin{proof}
We prove \eqref{EquationXInternalNormalOrdering}; the proof of \eqref{EquationYInternalNormalOrdering} is identical after replacing $(x_1,x_2,q,r)$ by $(y_1,y_2,p,s)$.

First observe that
\[
x_2x_1^m
=
q^mx_1^mx_2+r[m]_qx_1^{m+1}.
\]
This follows by induction on $m$. The case $m=1$ is the defining relation. If the identity holds for $m$, then
\begin{align*}
x_2x_1^{m+1}
&=
\left(q^mx_1^mx_2+r[m]_qx_1^{m+1}\right)x_1\\
&=
q^mx_1^m(qx_1x_2+rx_1^2)+r[m]_qx_1^{m+2}\\
&=
q^{m+1}x_1^{m+1}x_2+r(q^m+[m]_q)x_1^{m+2}\\
&=
q^{m+1}x_1^{m+1}x_2+r[m+1]_qx_1^{m+2}.
\end{align*}

Now write
\[
x_2^nx_1^m
=
\sum_{k=0}^{n}
C_{n,m}(k)x_1^{m+k}x_2^{n-k}.
\]
For $n=0$ one has $C_{0,m}(0)=1$. Multiplying the expansion for $n$ on the left by $x_2$ and using the previous identity gives
\[
C_{n+1,m}(k)
=
q^{m+k}C_{n,m}(k)
+
r[m+k-1]_qC_{n,m}(k-1),
\]
where $C_{n,m}(-1)=0$. A direct verification shows that
\[
C_{n,m}(k)
=
r^kq^{m(n-k)}
\begin{bmatrix}
n\\
k
\end{bmatrix}_q
[m]_{q}^{\overline{k}}
\]
satisfies this recurrence together with the initial condition.
\end{proof}

\begin{remark}\label{RemarkStirlingLahWhitney}
The coefficients in Proposition \ref{PropositionInternalNormalOrdering} are the first genuinely combinatorial coefficients of the double Ore setting. When $r=0$, the formula collapses to the familiar quantum-plane identity
\[
x_2^nx_1^m=q^{mn}x_1^mx_2^n.
\]
When $q=1$ and $r\neq 0$, however, one obtains
\[
\Lambda_{1,r}(n,m;k)
=
r^k\binom{n}{k}m^{\overline{k}}.
\]
Thus the Jordan relation $x_2x_1=x_1x_2+rx_1^2$ produces Lah--Whitney type coefficients. These are homogeneous relatives of the Stirling numbers appearing in classical normal ordering and fit naturally into the broader family of generalized Stirling and Whitney-type coefficients \cite{HsuShiue1998}. In particular, the Jordan cases in the Zhang--Zhang list should be treated as the first place where nontrivial Stirling-type combinatorics enters the theory.
\end{remark}

The next subsection records the family-wise internal parameters and the explicit mixed crossing data. This is the point where the general double Ore notation is specialized to the $26$ families in $\mathcal{LIST}$.

\subsection{Explicit family-wise mixed normal ordering coefficients}\label{SubsectionExplicitFamilywiseCoefficients}

We now make the mixed normal ordering coefficients completely explicit for the $26$ families in $\mathcal{LIST}$. The purpose of this subsection is to replace the abstract structural coefficients $a_{ijst}^{\bar{\mathbb{F}}}$ by concrete family-wise crossing data. This gives a uniform but explicit combinatorial description of the normal ordered form of every mixed word
\[
y_1^cy_2^dx_1^ax_2^b.
\]

We shall use the following compact notation. If a mixed relation contains a summand
\[
\lambda x_ty_\ell,
\]
we encode this summand as the triple
\[
(\lambda;t,\ell).
\]
Thus, for a fixed family $\bar{\mathbb{F}}$, we define finite sets
\[
\mathscr{K}_{i,s}^{\bar{\mathbb{F}}}
\subseteq
\Bbbk\times\{1,2\}\times\{1,2\},
\ i,s\in\{1,2\},
\]
by the rule
\begin{equation}\label{EquationCrossingKernelDefinition}
y_ix_s
=
\sum_{(\lambda;t,\ell)\in \mathscr{K}_{i,s}^{\bar{\mathbb{F}}}}
\lambda x_ty_\ell.
\end{equation}
In other words, $\mathscr{K}_{i,s}^{\bar{\mathbb{F}}}$ is the complete list of ordered summands appearing in the normal form of the forbidden pair $y_ix_s$.

\begin{definition}\label{DefinitionFamilyInternalParameters}
For each family $\bar{\mathbb{F}}$, let $(q_{\bar{\mathbb{F}}},r_{\bar{\mathbb{F}}})$ and $(p_{\bar{\mathbb{F}}},s_{\bar{\mathbb{F}}})$ be determined by
\[
x_2x_1=q_{\bar{\mathbb{F}}}x_1x_2+r_{\bar{\mathbb{F}}}x_1^2,
\
y_2y_1=p_{\bar{\mathbb{F}}}y_1y_2+s_{\bar{\mathbb{F}}}y_1^2.
\]
Thus:
\[
\begin{array}{c|c|c}
\bar{\mathbb{F}} & (q_{\bar{\mathbb{F}}},r_{\bar{\mathbb{F}}}) & (p_{\bar{\mathbb{F}}},s_{\bar{\mathbb{F}}})\\
\hline
\mathbb{A} & (1,0) & (1,1)\\
\mathbb{B},\mathbb{C} & (p,0) & (p,0)\\
\mathbb{D},\mathbb{E},\mathbb{F} & (-1,0) & (p,0)\\
\mathbb{G} & (1,0) & (p,0)\\
\mathbb{H} & (1,1) & (-1,0)\\
\mathbb{I},\mathbb{J},\mathbb{K},\mathbb{L} & (q,0) & (-1,0)\\
\mathbb{M},\mathbb{N},\mathbb{O},\mathbb{P},\mathbb{Q},\mathbb{R},\mathbb{S},\mathbb{T},\mathbb{U} & (-1,0) & (-1,0)\\
\mathbb{V},\mathbb{W},\mathbb{X},\mathbb{Y} & (1,0) & (-1,0)\\
\mathbb{Z} & (-1,0) & (1,0).
\end{array}
\]
\end{definition}

Hence
\begin{equation}\label{EquationInternalXFamily}
x_2^nx_1^m
=
\sum_{k=0}^{n}
\Lambda_{q_{\bar{\mathbb{F}}},r_{\bar{\mathbb{F}}}}(n,m;k)
x_1^{m+k}x_2^{n-k},
\end{equation}
and
\begin{equation}\label{EquationInternalYFamily}
y_2^ny_1^m
=
\sum_{k=0}^{n}
\Lambda_{p_{\bar{\mathbb{F}}},s_{\bar{\mathbb{F}}}}(n,m;k)
y_1^{m+k}y_2^{n-k}.
\end{equation}

\begin{remark}\label{RemarkJordanStirlingExplicit}
The coefficients in \eqref{EquationInternalXFamily} and \eqref{EquationInternalYFamily} specialize to scalar quantum factors whenever $r_{\bar{\mathbb{F}}}=0$ or $s_{\bar{\mathbb{F}}}=0$. The only Jordan contributions among the $26$ families are:
\[
\mathbb{A}:\ y_2y_1=y_1y_2+y_1^2,
\quad
\mathbb{H}:\ x_2x_1=x_1x_2+x_1^2.
\]
Consequently,
\[
\mathbb{A}:\
y_2^ny_1^m
=
\sum_{k=0}^{n}
\binom{n}{k}m^{\overline{k}}
y_1^{m+k}y_2^{n-k},
\]
and
\[
\mathbb{H}:\
x_2^nx_1^m
=
\sum_{k=0}^{n}
\binom{n}{k}m^{\overline{k}}
x_1^{m+k}x_2^{n-k}.
\]
Thus the families $\mathbb{A}$ and $\mathbb{H}$ carry the Lah--Whitney, or Stirling-type, layer of the normal ordering theory.
\end{remark}

We now list the crossing kernels $\mathscr{K}_{i,s}^{\bar{\mathbb{F}}}$ explicitly.

\begin{center}
\renewcommand{\arraystretch}{1.35}
\begin{longtable}{c|p{0.82\textwidth}}
\toprule
$\bar{\mathbb{F}}$ & Explicit mixed crossing kernels\\
\midrule
\endfirsthead

\toprule
$\bar{\mathbb{F}}$ & Explicit mixed crossing kernels\\
\midrule
\endhead

$\mathbb{A}$ &
$\mathscr{K}_{1,1}^{\mathbb{A}}=\{(1;1,1)\}$,

$\mathscr{K}_{1,2}^{\mathbb{A}}=\{(1;2,1),(1;1,2)\}$,

$\mathscr{K}_{2,1}^{\mathbb{A}}=\{(1;1,2)\}$,

$\mathscr{K}_{2,2}^{\mathbb{A}}=\{(-2;2,1),(-1;1,2),(1;2,2)\}$.
\\

$\mathbb{B}$ &
$\mathscr{K}_{1,1}^{\mathbb{B}}=\{(1;2,2)\}$,

$\mathscr{K}_{1,2}^{\mathbb{B}}=\{(1;1,2)\}$,

$\mathscr{K}_{2,1}^{\mathbb{B}}=\{(-1;2,1)\}$,

$\mathscr{K}_{2,2}^{\mathbb{B}}=\{(1;1,1)\}$.
\\

$\mathbb{C}$ &
$\mathscr{K}_{1,1}^{\mathbb{C}}=\{(-1;1,1),(p^2;2,1),(1;1,2),(-p;2,2)\}$,

$\mathscr{K}_{1,2}^{\mathbb{C}}=\{(-p;1,1),(1;2,1),(1;1,2),(-p;2,2)\}$,

$\mathscr{K}_{2,1}^{\mathbb{C}}=\{(-p;1,1),(-2p^2;2,1),(p;1,2),(-p;2,2)\}$,

$\mathscr{K}_{2,2}^{\mathbb{C}}=\{(-p;1,1),(p^2;2,1),(1;1,2),(-1;2,2)\}$.
\\

$\mathbb{D}$ &
$\mathscr{K}_{1,1}^{\mathbb{D}}=\{(-p;1,1)\}$,

$\mathscr{K}_{1,2}^{\mathbb{D}}=\{(-p^2;2,1),(1;1,2)\}$,

$\mathscr{K}_{2,1}^{\mathbb{D}}=\{(p;1,2)\}$,

$\mathscr{K}_{2,2}^{\mathbb{D}}=\{(1;1,1),(1;2,2)\}$.
\\

$\mathbb{E}$ &
$\mathscr{K}_{1,1}^{\mathbb{E}}=\{(1;1,2),(1;2,2)\}$,

$\mathscr{K}_{1,2}^{\mathbb{E}}=\{(1;1,2),(-1;2,2)\}$,

$\mathscr{K}_{2,1}^{\mathbb{E}}=\{(-1;1,1),(1;2,1)\}$,

$\mathscr{K}_{2,2}^{\mathbb{E}}=\{(1;1,1),(1;2,1)\}$.
\\

$\mathbb{F}$ &
$\mathscr{K}_{1,1}^{\mathbb{F}}=\{(-1;1,1),(-p;2,1),(1;1,2),(-1;2,2)\}$,

$\mathscr{K}_{1,2}^{\mathbb{F}}=\{(-p;1,1),(1;2,1),(1;1,2),(1;2,2)\}$,

$\mathscr{K}_{2,1}^{\mathbb{F}}=\{(-p;1,1),(p;2,1),(p;1,2),(1;2,2)\}$,

$\mathscr{K}_{2,2}^{\mathbb{F}}=\{(-p;1,1),(-p;2,1),(1;1,2),(-p;2,2)\}$.
\\

$\mathbb{G}$ &
$\mathscr{K}_{1,1}^{\mathbb{G}}=\{(p;1,1)\}$,

$\mathscr{K}_{1,2}^{\mathbb{G}}=\{(p;1,1),(p^2;2,1),(1;1,2)\}$,

$\mathscr{K}_{2,1}^{\mathbb{G}}=\{(p;1,2)\}$,

$\mathscr{K}_{2,2}^{\mathbb{G}}=\{(f;1,1),(-1;1,2),(1;2,2)\}$.
\\

$\mathbb{H}$ &
$\mathscr{K}_{1,1}^{\mathbb{H}}=\{(1;1,2)\}$,

$\mathscr{K}_{1,2}^{\mathbb{H}}=\{(f;1,2),(1;2,2)\}$,

$\mathscr{K}_{2,1}^{\mathbb{H}}=\{(1;1,1)\}$,

$\mathscr{K}_{2,2}^{\mathbb{H}}=\{(f;1,1),(1;2,1)\}$.
\\

$\mathbb{I}$ &
$\mathscr{K}_{1,1}^{\mathbb{I}}=\{(-q;1,1),(-q;2,1),(1;1,2),(-q;2,2)\}$,

$\mathscr{K}_{1,2}^{\mathbb{I}}=\{(1;1,1),(1;2,1),(1;1,2),(-q;2,2)\}$,

$\mathscr{K}_{2,1}^{\mathbb{I}}=\{(1;1,1),(q;2,1),(q;1,2),(-q;2,2)\}$,

$\mathscr{K}_{2,2}^{\mathbb{I}}=\{(-1;1,1),(-q;2,1),(1;1,2),(-1;2,2)\}$.
\\

$\mathbb{J}$ &
$\mathscr{K}_{1,1}^{\mathbb{J}}=\{(1;2,1),(1;2,2)\}$,

$\mathscr{K}_{1,2}^{\mathbb{J}}=\{(-1;1,1),(1;1,2)\}$,

$\mathscr{K}_{2,1}^{\mathbb{J}}=\{(1;2,1),(-1;2,2)\}$,

$\mathscr{K}_{2,2}^{\mathbb{J}}=\{(1;1,1),(1;1,2)\}$.
\\

$\mathbb{K}$ &
$\mathscr{K}_{1,1}^{\mathbb{K}}=\{(1;1,1)\}$,

$\mathscr{K}_{1,2}^{\mathbb{K}}=\{(1;2,2)\}$,

$\mathscr{K}_{2,1}^{\mathbb{K}}=\{(1;1,2)\}$,

$\mathscr{K}_{2,2}^{\mathbb{K}}=\{(f;2,1)\}$.
\\

$\mathbb{L}$ &
$\mathscr{K}_{1,1}^{\mathbb{L}}=\{(f;1,2)\}$,

$\mathscr{K}_{1,2}^{\mathbb{L}}=\{(1;2,2)\}$,

$\mathscr{K}_{2,1}^{\mathbb{L}}=\{(f;1,1)\}$,

$\mathscr{K}_{2,2}^{\mathbb{L}}=\{(1;2,1)\}$.
\\

$\mathbb{M}$ &
$\mathscr{K}_{1,1}^{\mathbb{M}}=\{(1;2,1),(1;1,2)\}$,

$\mathscr{K}_{1,2}^{\mathbb{M}}=\{(f;1,1),(-1;2,2)\}$,

$\mathscr{K}_{2,1}^{\mathbb{M}}=\{(1;1,1),(-1;2,2)\}$,

$\mathscr{K}_{2,2}^{\mathbb{M}}=\{(-1;2,1),(-f;1,2)\}$.
\\

$\mathbb{N}$ &
$\mathscr{K}_{1,1}^{\mathbb{N}}=\{(-g;2,1),(f;2,2)\}$,

$\mathscr{K}_{1,2}^{\mathbb{N}}=\{(g;1,1),(f;1,2)\}$,

$\mathscr{K}_{2,1}^{\mathbb{N}}=\{(f;2,1),(-g;2,2)\}$,

$\mathscr{K}_{2,2}^{\mathbb{N}}=\{(f;1,1),(g;1,2)\}$.
\\

$\mathbb{O}$ &
$\mathscr{K}_{1,1}^{\mathbb{O}}=\{(1;1,1),(f;2,2)\}$,

$\mathscr{K}_{1,2}^{\mathbb{O}}=\{(-1;2,1),(1;1,2)\}$,

$\mathscr{K}_{2,1}^{\mathbb{O}}=\{(f;2,1),(-1;1,2)\}$,

$\mathscr{K}_{2,2}^{\mathbb{O}}=\{(1;1,1),(1;2,2)\}$.
\\

$\mathbb{P}$ &
$\mathscr{K}_{1,1}^{\mathbb{P}}=\{(1;1,2),(f;2,2)\}$,

$\mathscr{K}_{1,2}^{\mathbb{P}}=\{(1;1,2),(1;2,2)\}$,

$\mathscr{K}_{2,1}^{\mathbb{P}}=\{(1;1,1),(-f;2,1)\}$,

$\mathscr{K}_{2,2}^{\mathbb{P}}=\{(-1;1,1),(1;2,1)\}$.
\\

$\mathbb{Q}$ &
$\mathscr{K}_{1,1}^{\mathbb{Q}}=\{(1;1,2)\}$,

$\mathscr{K}_{1,2}^{\mathbb{Q}}=\{(1;1,1),(1;2,1),(1;1,2)\}$,

$\mathscr{K}_{2,1}^{\mathbb{Q}}=\{(-1;1,1)\}$,

$\mathscr{K}_{2,2}^{\mathbb{Q}}=\{(1;1,1),(-1;1,2),(1;2,2)\}$.
\\

$\mathbb{R}$ &
$\mathscr{K}_{1,1}^{\mathbb{R}}=\{(1;1,1),(1;2,1),(1;1,2)\}$,

$\mathscr{K}_{1,2}^{\mathbb{R}}=\{(1;1,2)\}$,

$\mathscr{K}_{2,1}^{\mathbb{R}}=\{(1;2,1)\}$,

$\mathscr{K}_{2,2}^{\mathbb{R}}=\{(-1;2,1),(-1;1,2),(1;2,2)\}$.
\\

$\mathbb{S}$ &
$\mathscr{K}_{1,1}^{\mathbb{S}}=\{(-1;1,1),(1;2,1),(1;1,2),(1;2,2)\}$,

$\mathscr{K}_{1,2}^{\mathbb{S}}=\{(1;1,1),(-1;2,1),(1;1,2),(1;2,2)\}$,

$\mathscr{K}_{2,1}^{\mathbb{S}}=\{(1;1,1),(1;2,1),(-1;1,2),(1;2,2)\}$,

$\mathscr{K}_{2,2}^{\mathbb{S}}=\{(1;1,1),(1;2,1),(1;1,2),(-1;2,2)\}$.
\\

$\mathbb{T}$ &
$\mathscr{K}_{1,1}^{\mathbb{T}}=\{(-1;1,1),(1;2,1),(1;1,2),(1;2,2)\}$,

$\mathscr{K}_{1,2}^{\mathbb{T}}=\{(1;1,1),(-1;2,1),(1;1,2),(1;2,2)\}$,

$\mathscr{K}_{2,1}^{\mathbb{T}}=\{(1;1,1),(1;2,1),(1;1,2),(-1;2,2)\}$,

$\mathscr{K}_{2,2}^{\mathbb{T}}=\{(1;1,1),(1;2,1),(-1;1,2),(1;2,2)\}$.
\\

$\mathbb{U}$ &
$\mathscr{K}_{1,1}^{\mathbb{U}}=\{(-1;1,1),(1;2,1),(1;1,2),(1;2,2)\}$,

$\mathscr{K}_{1,2}^{\mathbb{U}}=\{(1;1,1),(1;2,1),(1;1,2),(-1;2,2)\}$,

$\mathscr{K}_{2,1}^{\mathbb{U}}=\{(1;1,1),(1;2,1),(-1;1,2),(1;2,2)\}$,

$\mathscr{K}_{2,2}^{\mathbb{U}}=\{(1;1,1),(-1;2,1),(1;1,2),(1;2,2)\}$.
\\

$\mathbb{V}$ &
$\mathscr{K}_{1,1}^{\mathbb{V}}=\{(1;2,1),(1;1,2)\}$,

$\mathscr{K}_{1,2}^{\mathbb{V}}=\{(1;2,1)\}$,

$\mathscr{K}_{2,1}^{\mathbb{V}}=\{(-1;1,1),(1;2,1)\}$,

$\mathscr{K}_{2,2}^{\mathbb{V}}=\{(1;2,2)\}$.
\\

$\mathbb{W}$ &
$\mathscr{K}_{1,1}^{\mathbb{W}}=\{(f;2,1),(1;1,2)\}$,

$\mathscr{K}_{1,2}^{\mathbb{W}}=\{(1;1,1),(-1;2,2)\}$,

$\mathscr{K}_{2,1}^{\mathbb{W}}=\{(1;1,1),(f;2,2)\}$,

$\mathscr{K}_{2,2}^{\mathbb{W}}=\{(-1;2,1),(1;1,2)\}$.
\\

$\mathbb{X}$ &
$\mathscr{K}_{1,1}^{\mathbb{X}}=\{(1;1,2)\}$,

$\mathscr{K}_{1,2}^{\mathbb{X}}=\{(1;1,2),(1;2,2)\}$,

$\mathscr{K}_{2,1}^{\mathbb{X}}=\{(1;1,1)\}$,

$\mathscr{K}_{2,2}^{\mathbb{X}}=\{(1;1,1),(1;2,1)\}$.
\\

$\mathbb{Y}$ &
$\mathscr{K}_{1,1}^{\mathbb{Y}}=\{(1;1,1)\}$,

$\mathscr{K}_{1,2}^{\mathbb{Y}}=\{(f;1,1),(-1;2,1),(1;1,2)\}$,

$\mathscr{K}_{2,1}^{\mathbb{Y}}=\{(1;1,2)\}$,

$\mathscr{K}_{2,2}^{\mathbb{Y}}=\{(1;1,1),(f;1,2),(-1;2,2)\}$.
\\

$\mathbb{Z}$ &
$\mathscr{K}_{1,1}^{\mathbb{Z}}=\{(1;1,1),(1;2,2)\}$,

$\mathscr{K}_{1,2}^{\mathbb{Z}}=\{(1;2,1),(1;1,2)\}$,

$\mathscr{K}_{2,1}^{\mathbb{Z}}=\{(f;2,1),(-1;1,2)\}$,

$\mathscr{K}_{2,2}^{\mathbb{Z}}=\{(f;1,1),(-1;2,2)\}$.
\\

\bottomrule
\end{longtable}
\end{center}

We next define the explicit coefficient arrays which encode the normal ordering of one $y$-letter through an arbitrary ordered $x$-block.

\begin{definition}\label{DefinitionOmegaPsiFamily}
For a fixed family $\bar{\mathbb{F}}$, define
\[
\Omega_{\bar{\mathbb{F}}}(\mu,\nu;t;\alpha,\beta)
\]
by
\[
{\rm NF}_{\bar{\mathbb{F}}}(x_1^\mu x_2^\nu x_t)
=
\sum_{\alpha,\beta\ge0}
\Omega_{\bar{\mathbb{F}}}(\mu,\nu;t;\alpha,\beta)
x_1^\alpha x_2^\beta.
\]
Explicitly,
\[
\Omega_{\bar{\mathbb{F}}}(\mu,\nu;2;\alpha,\beta)
=
\delta_{\alpha,\mu}\delta_{\beta,\nu+1},
\]
and
\[
\Omega_{\bar{\mathbb{F}}}(\mu,\nu;1;\alpha,\beta)
=
\sum_{k=0}^{\nu}
\Lambda_{q_{\bar{\mathbb{F}}},r_{\bar{\mathbb{F}}}}(\nu,1;k)
\delta_{\alpha,\mu+1+k}
\delta_{\beta,\nu-k}.
\]

Similarly, define
\[
\Psi_{\bar{\mathbb{F}}}(u,v;\ell;\gamma,\delta)
\]
by
\[
{\rm NF}_{\bar{\mathbb{F}}}(y_1^uy_2^vy_\ell)
=
\sum_{\gamma,\delta\ge0}
\Psi_{\bar{\mathbb{F}}}(u,v;\ell;\gamma,\delta)
y_1^\gamma y_2^\delta.
\]
Explicitly,
\[
\Psi_{\bar{\mathbb{F}}}(u,v;2;\gamma,\delta)
=
\delta_{\gamma,u}\delta_{\delta,v+1},
\]
and
\[
\Psi_{\bar{\mathbb{F}}}(u,v;1;\gamma,\delta)
=
\sum_{k=0}^{v}
\Lambda_{p_{\bar{\mathbb{F}}},s_{\bar{\mathbb{F}}}}(v,1;k)
\delta_{\gamma,u+1+k}
\delta_{\delta,v-k}.
\]
\end{definition}

\begin{proposition}\label{PropositionSingleMixedExplicit}
Fix a family $\bar{\mathbb{F}}\in\{\mathbb{A},\ldots,\mathbb{Z}\}$. For $i\in\{1,2\}$ and $a,b\in\bar{\mathbb{N}}$, there exist unique coefficients
\[
W_{\bar{\mathbb{F}},i}^{(a,b)}(\alpha,\beta;\ell)\in\Bbbk,
\
\alpha,\beta\in\bar{\mathbb{N}},\ \ell\in\{1,2\},
\]
such that
\begin{equation}\label{EquationSingleMixedExplicitExpansion}
{\rm NF}_{\bar{\mathbb{F}}}(y_ix_1^ax_2^b)
=
\sum_{\alpha,\beta\ge0}
\sum_{\ell=1}^{2}
W_{\bar{\mathbb{F}},i}^{(a,b)}(\alpha,\beta;\ell)
x_1^\alpha x_2^\beta y_\ell.
\end{equation}
They are determined by the initial conditions
\[
W_{\bar{\mathbb{F}},i}^{(0,0)}(0,0;\ell)=\delta_{i,\ell},
\
W_{\bar{\mathbb{F}},i}^{(0,0)}(\alpha,\beta;\ell)=0
\text{ if }(\alpha,\beta)\neq(0,0),
\]
and by the following explicit recursion.

If $b>0$, set
\[
(a',b',\varepsilon)=(a,b-1,2).
\]
If $b=0$ and $a>0$, set
\[
(a',b',\varepsilon)=(a-1,0,1).
\]
Then, for $(a,b)\neq(0,0)$,
\begin{align}
&W_{\bar{\mathbb{F}},i}^{(a,b)}(\alpha,\beta;\ell)
\notag\\
&=
\sum_{\mu,\nu\ge0}
\sum_{j=1}^{2}
W_{\bar{\mathbb{F}},i}^{(a',b')}(\mu,\nu;j)
\sum_{(\lambda;t,\ell)\in\mathscr{K}_{j,\varepsilon}^{\bar{\mathbb{F}}}}
\lambda\,
\Omega_{\bar{\mathbb{F}}}(\mu,\nu;t;\alpha,\beta).
\label{EquationSingleMixedExplicitRecursion}
\end{align}
\end{proposition}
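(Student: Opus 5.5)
Existence and uniqueness of the coefficients $W_{\bar{\mathbb{F}},i}^{(a,b)}(\alpha,\beta;\ell)$ will come from Proposition \ref{PropositionPBWBasis}, together with a degree argument. The relations are homogeneous, so $y_ix_1^ax_2^b$ lies in the span of words of degree $a+b$ in the $x$'s and degree $1$ in the $y$'s. Every mixed relation \eqref{GeneralMixedRelation} and every internal relation preserves this bidegree. Hence ${\rm NF}_{\bar{\mathbb{F}}}(y_ix_1^ax_2^b)$ is a linear combination of basis monomials $x_1^\alpha x_2^\beta y_\ell$ with $\alpha+\beta=a+b$. The uniqueness of the PBW expansion gives uniqueness of the coefficients, and they vanish outside this finite range.

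For the recursion I will argue by induction on $a+b$; the case $(a,b)=(0,0)$ is immediate since $y_i$ is already normal. For $(a,b)\neq(0,0)$ the choice of $(a',b',\varepsilon)$ gives the factorization $x_1^ax_2^b=x_1^{a'}x_2^{b'}x_\varepsilon$ as words. Thus
\[
y_ix_1^ax_2^b=\bigl(y_ix_1^{a'}x_2^{b'}\bigr)x_\varepsilon .
\]
By the induction hypothesis this equals
\[
\sum_{\mu,\nu}\sum_{j}W_{\bar{\mathbb{F}},i}^{(a',b')}(\mu,\nu;j)\,x_1^\mu x_2^\nu\,(y_jx_\varepsilon).
\]
Apply the crossing kernel \eqref{EquationCrossingKernelDefinition} to replace $y_jx_\varepsilon$ by $\sum_{(\lambda;t,\ell)\in\mathscr{K}_{j,\varepsilon}^{\bar{\mathbb{F}}}}\lambda x_ty_\ell$. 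This leaves terms $\lambda\,x_1^\mu x_2^\nu x_t\,y_\ell$. Next normal order the $x$-part $x_1^\mu x_2^\nu x_t$ using Definition \ref{DefinitionOmegaPsiFamily}. For $t=2$ the word is already normal. For $t=1$ we use $x_2^\nu x_1=\sum_k\Lambda_{q,r}(\nu,1;k)x_1^{1+k}x_2^{\nu-k}$, which is Proposition \ref{PropositionInternalNormalOrdering} with $m=1$, and then multiply on the left by $x_1^\mu$. The result is $\sum_{\alpha,\beta}\Omega_{\bar{\mathbb{F}}}(\mu,\nu;t;\alpha,\beta)x_1^\alpha x_2^\beta$. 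Multiplying on the right by $y_\ell$ leaves a PBW monomial $x_1^\alpha x_2^\beta y_\ell$, so no further reduction is required.

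Collecting the coefficient of $x_1^\alpha x_2^\beta y_\ell$ yields exactly \eqref{EquationSingleMixedExplicitRecursion}. Here the inner sum runs over those kernel triples whose second index equals the fixed $\ell$. Uniqueness of the PBW expansion then identifies this coefficient with $W_{\bar{\mathbb{F}},i}^{(a,b)}(\alpha,\beta;\ell)$.

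The only delicate point is justifying that these rewriting steps compute the genuine normal form. This rests on equalities in $B$ combined with Proposition \ref{PropositionPBWBasis}: each step is an identity in the algebra, not a choice of reduction path, so Remark \ref{RemarkNormalOrdering} applies. I also need to check that the $x$-part is correctly reduced by the $\Omega$ formula when $x_1^\mu$ is placed to the left. This holds because left multiplication by $x_1^\mu$ preserves normal $x$-monomials.
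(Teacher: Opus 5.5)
Your proposal is correct and follows essentially the same route as the paper. Both argue by induction on $a+b$ via the factorization $(y_ix_1^{a'}x_2^{b'})x_\varepsilon$, replace each $y_jx_\varepsilon$ using the crossing kernel, normalize the $x$-part with $\Omega_{\bar{\mathbb{F}}}$, and read off coefficients using PBW uniqueness; your extra bidegree argument, which shows a priori that only monomials $x_1^\alpha x_2^\beta y_\ell$ can occur, is a harmless addition that the paper leaves implicit in the induction.
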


\begin{proof}
The proof is by induction on $a+b$. The case $a=b=0$ is immediate. Assume $(a,b)\neq(0,0)$. If $b>0$, then
\[
y_ix_1^ax_2^b=(y_ix_1^ax_2^{b-1})x_2.
\]
If $b=0$ and $a>0$, then
\[
y_ix_1^a=(y_ix_1^{a-1})x_1.
\]
In either case, the induction hypothesis gives the normal ordered expansion of the first factor. Multiplying by $x_\varepsilon$ on the right creates subwords $y_jx_\varepsilon$, and these are replaced explicitly using the family-wise kernel $\mathscr{K}_{j,\varepsilon}^{\bar{\mathbb{F}}}$. Finally, the possible internal disorder in the $x$-part is normalized by the coefficients $\Omega_{\bar{\mathbb{F}}}$. Collecting the coefficient of $x_1^\alpha x_2^\beta y_\ell$ gives \eqref{EquationSingleMixedExplicitRecursion}. Uniqueness follows from the PBW basis.
\end{proof}

The coefficients $W_{\bar{\mathbb{F}},i}^{(a,b)}(\alpha,\beta;\ell)$ are the double Ore analogues of the two-letter normal ordering coefficients in the three-dimensional case. They are the first level of the combinatorics: they record all weighted paths by which a single $y_i$ crosses the ordered block $x_1^ax_2^b$.

We now pass from one $y$-letter to arbitrary $y$-blocks.

\begin{proposition}\label{PropositionMixedBlockExplicit}
Fix a family $\bar{\mathbb{F}}\in\{\mathbb{A},\ldots,\mathbb{Z}\}$. For $a,b,c,d\in\bar{\mathbb{N}}$, there exist unique coefficients
\[
\Theta_{\bar{\mathbb{F}}}(a,b;c,d;\alpha,\beta;\gamma,\delta)\in\Bbbk
\]
such that
\begin{equation}\label{EquationThetaExplicitExpansion}
{\rm NF}_{\bar{\mathbb{F}}}(y_1^cy_2^dx_1^ax_2^b)
=
\sum_{\alpha,\beta,\gamma,\delta\ge0}
\Theta_{\bar{\mathbb{F}}}(a,b;c,d;\alpha,\beta;\gamma,\delta)
x_1^\alpha x_2^\beta y_1^\gamma y_2^\delta.
\end{equation}
The initial conditions are
\[
\Theta_{\bar{\mathbb{F}}}(a,b;0,0;\alpha,\beta;0,0)
=
\delta_{\alpha,a}\delta_{\beta,b},
\]
and
\[
\Theta_{\bar{\mathbb{F}}}(a,b;0,0;\alpha,\beta;\gamma,\delta)=0
\text{ if }(\gamma,\delta)\neq(0,0).
\]

If $c+d>0$, define
\[
(c',d',i_0)=
\begin{cases}
(c,d-1,2), & d>0,\\
(c-1,0,1), & d=0,\ c>0.
\end{cases}
\]
Then
\begin{align}
&\Theta_{\bar{\mathbb{F}}}(a,b;c,d;\alpha,\beta;\gamma,\delta)
\notag\\
&=
\sum_{\mu,\nu\ge0}
\sum_{\ell=1}^{2}
\sum_{u,v\ge0}
W_{\bar{\mathbb{F}},i_0}^{(a,b)}(\mu,\nu;\ell)
\Theta_{\bar{\mathbb{F}}}(\mu,\nu;c',d';\alpha,\beta;u,v)
\Psi_{\bar{\mathbb{F}}}(u,v;\ell;\gamma,\delta).
\label{EquationThetaExplicitRecursion}
\end{align}
\end{proposition}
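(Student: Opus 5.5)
We argue by induction on $c+d$, peeling off the \emph{rightmost} letter of the $y$-block. The plan is to push that letter through the $x$-block using Proposition \ref{PropositionSingleMixedExplicit}, normalize the remaining shorter $y$-block by the induction hypothesis, and finally re-absorb the peeled letter into the $y$-part via the coefficients $\Psi_{\bar{\mathbb{F}}}$ of Definition \ref{DefinitionOmegaPsiFamily}. Uniqueness of the coefficients $\Theta_{\bar{\mathbb{F}}}$ is immediate from Proposition \ref{PropositionPBWBasis}: the right-hand side of \eqref{EquationThetaExplicitExpansion} is an expansion in the basis $\mathcal{B}$. So the content of the statement is existence together with the recursion \eqref{EquationThetaExplicitRecursion}.

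For $c=d=0$ the word $x_1^ax_2^b$ is already normal ordered, which gives the stated initial conditions. Now assume $c+d>0$ and define $(c',d',i_0)$ as in the statement. The first step is the factorization
\[
y_1^cy_2^dx_1^ax_2^b=y_1^{c'}y_2^{d'}\,\bigl(y_{i_0}x_1^ax_2^b\bigr).
\]
For $d>0$ this holds because $y_1^cy_2^d=y_1^cy_2^{d-1}y_2$. For $d=0$ it holds because $y_1^c=y_1^{c-1}y_1$.

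By Proposition \ref{PropositionSingleMixedExplicit}, the bracketed factor equals
\[
\sum_{\mu,\nu,\ell}W_{\bar{\mathbb{F}},i_0}^{(a,b)}(\mu,\nu;\ell)\,x_1^\mu x_2^\nu y_\ell .
\]
Hence the whole word becomes
\[
\sum W_{\bar{\mathbb{F}},i_0}^{(a,b)}(\mu,\nu;\ell)\,\bigl(y_1^{c'}y_2^{d'}x_1^\mu x_2^\nu\bigr)\,y_\ell .
\]
Since $c'+d'<c+d$, the induction hypothesis applied with the new exponents $(\mu,\nu)$ rewrites each bracket as
\[
\sum_{\alpha,\beta,u,v}\Theta_{\bar{\mathbb{F}}}(\mu,\nu;c',d';\alpha,\beta;u,v)\,x_1^\alpha x_2^\beta y_1^uy_2^v .
\]
Multiplying on the right by $y_\ell$ leaves the ordered prefix $x_1^\alpha x_2^\beta$ untouched. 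The only disorder is inside $y_1^uy_2^vy_\ell$, and by Definition \ref{DefinitionOmegaPsiFamily} this normalizes to
\[
\sum_{\gamma,\delta}\Psi_{\bar{\mathbb{F}}}(u,v;\ell;\gamma,\delta)\,y_1^\gamma y_2^\delta .
\]
This uses the internal formula \eqref{EquationInternalYFamily} with $m=1$.

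Collecting the coefficient of $x_1^\alpha x_2^\beta y_1^\gamma y_2^\delta$ yields exactly \eqref{EquationThetaExplicitRecursion}. The sums involved are finite, since each coefficient array vanishes outside a finite range. This follows from homogeneity of the relations: the total $x$-degree $a+b$ and the total $y$-degree $c+d$ are preserved.

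The only point needing care is the justification that normal ordering the $y$-tail, after left multiplication by an ordered $x$-monomial, does not disturb the $x$-part. Concretely, we need the identity
\[
x_1^\alpha x_2^\beta\cdot{\rm NF}(y_1^uy_2^vy_\ell)={\rm NF}\bigl(x_1^\alpha x_2^\beta y_1^uy_2^vy_\ell\bigr).
\]
I would argue this directly. The relation \eqref{NonMixingYRelation} involves only $y$-letters, so applying it inside the suffix produces words of the form $x_1^\alpha x_2^\beta y_1^\gamma y_2^\delta$. These lie in $\mathcal{B}$, so by Remark \ref{RemarkNormalOrdering} the result is the normal form.
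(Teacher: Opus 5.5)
Your proposal is correct and follows the paper's proof essentially verbatim. Both arguments use the same factorization $y_1^{c'}y_2^{d'}\bigl(y_{i_0}x_1^ax_2^b\bigr)$, normalize the inner factor by Proposition \ref{PropositionSingleMixedExplicit}, apply induction on $c+d$ (over all $x$-exponents) to the remaining $y$-block, absorb $y_\ell$ via $\Psi_{\bar{\mathbb{F}}}$, and get uniqueness from the PBW basis. Your remarks on finiteness via bihomogeneity, and on left multiplication by $x_1^\alpha x_2^\beta$ preserving normal forms, only make explicit what the paper leaves implicit.
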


\begin{proof}
If $c=d=0$, then the word is already an ordered $x$-block. Suppose $c+d>0$. If $d>0$, write
\[
y_1^cy_2^dx_1^ax_2^b
=
y_1^cy_2^{d-1}(y_2x_1^ax_2^b).
\]
If $d=0$ and $c>0$, write
\[
y_1^cx_1^ax_2^b
=
y_1^{c-1}(y_1x_1^ax_2^b).
\]
The inner factor is normalized by Proposition \ref{PropositionSingleMixedExplicit}. Then the remaining $y$-block is moved through the resulting $x$-block by the induction hypothesis. Finally, right multiplication by the remaining letter $y_\ell$ is normalized by $\Psi_{\bar{\mathbb{F}}}$. This gives \eqref{EquationThetaExplicitRecursion}. PBW uniqueness gives uniqueness of the coefficients.
\end{proof}

\begin{remark}\label{RemarkThetaWeightedPaths}
The coefficient
\[
\Theta_{\bar{\mathbb{F}}}(a,b;c,d;\alpha,\beta;\gamma,\delta)
\]
is a weighted sum over all admissible reduction paths sending the forbidden word
\[
y_1^cy_2^dx_1^ax_2^b
\]
to the normal monomial
\[
x_1^\alpha x_2^\beta y_1^\gamma y_2^\delta.
\]
Every local crossing contributes one of the explicit weights appearing in the kernels $\mathscr{K}_{i,s}^{\bar{\mathbb{F}}}$, while every internal correction in the $x$- or $y$-part contributes a coefficient $\Lambda_{q_{\bar{\mathbb{F}}},r_{\bar{\mathbb{F}}}}$ or $\Lambda_{p_{\bar{\mathbb{F}}},s_{\bar{\mathbb{F}}}}$. Thus, in the Jordan families $\mathbb{A}$ and $\mathbb{H}$, the coefficients $\Theta_{\bar{\mathbb{F}}}$ contain Lah--Whitney, hence Stirling-type, substructures.
\end{remark}

We now give the explicit structure constants for products of arbitrary normal monomials.

\begin{proposition}\label{PropositionExplicitProductConstants}
Fix a family $\bar{\mathbb{F}}\in\{\mathbb{A},\ldots,\mathbb{Z}\}$. For normal monomials
\[
M=x_1^ax_2^by_1^cy_2^d,
\qquad
N=x_1^{a'}x_2^{b'}y_1^{c'}y_2^{d'},
\]
one has
\begin{align}
&{\rm NF}_{\bar{\mathbb{F}}}(MN)
\notag\\
&=
\sum_{A,B,C,D\ge0}
\mathfrak{m}_{\bar{\mathbb{F}}}
\begin{bmatrix}
a,b;c,d\\
a',b';c',d'\\
A,B;C,D
\end{bmatrix}
x_1^Ax_2^By_1^Cy_2^D,
\label{EquationProductConstantExpansionExplicit}
\end{align}
where the product constants are given explicitly by
\begin{align}
&\mathfrak{m}_{\bar{\mathbb{F}}}
\begin{bmatrix}
a,b;c,d\\
a',b';c',d'\\
A,B;C,D
\end{bmatrix}
\notag\\
&=
\sum_{\alpha,\beta,\gamma,\delta\ge0}
\sum_{r,s\ge0}
\Theta_{\bar{\mathbb{F}}}(a',b';c,d;\alpha,\beta;\gamma,\delta)
\Lambda_{q_{\bar{\mathbb{F}}},r_{\bar{\mathbb{F}}}}(b,\alpha;r)
\Lambda_{p_{\bar{\mathbb{F}}},s_{\bar{\mathbb{F}}}}(\delta,c';s)
\notag\\
&\qquad\qquad\qquad\qquad
\times
\delta_{A,a+\alpha+r}
\delta_{B,b-r+\beta}
\delta_{C,\gamma+c'+s}
\delta_{D,\delta-s+d'}.
\label{EquationProductConstantsExplicitFormula}
\end{align}
\end{proposition}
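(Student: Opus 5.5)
The plan is to factor the product according to the PBW ordering and normalize in three stages, using associativity of $B$ together with uniqueness of normal forms (Proposition \ref{PropositionPBWBasis}) to justify collecting coefficients at the end. Write
\[
MN=x_1^ax_2^b\,\bigl(y_1^cy_2^d\,x_1^{a'}x_2^{b'}\bigr)\,y_1^{c'}y_2^{d'}.
\]
The only disordered part is the middle factor. By Proposition \ref{PropositionMixedBlockExplicit} it equals $\sum\Theta_{\bar{\mathbb{F}}}(a',b';c,d;\alpha,\beta;\gamma,\delta)\,x_1^\alpha x_2^\beta y_1^\gamma y_2^\delta$. Substituting this gives
\[
MN=\sum_{\alpha,\beta,\gamma,\delta}\Theta_{\bar{\mathbb{F}}}(a',b';c,d;\alpha,\beta;\gamma,\delta)\;x_1^a\,(x_2^bx_1^\alpha)\,x_2^\beta\;y_1^\gamma\,(y_2^\delta y_1^{c'})\,y_2^{d'} .
\]

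Next I normalize the two internal junctions with Proposition \ref{PropositionInternalNormalOrdering}, in its family form \eqref{EquationInternalXFamily}--\eqref{EquationInternalYFamily}. This gives
\[
x_2^bx_1^\alpha=\sum_r\Lambda_{q_{\bar{\mathbb{F}}},r_{\bar{\mathbb{F}}}}(b,\alpha;r)\,x_1^{\alpha+r}x_2^{b-r}
\]
and
\[
y_2^\delta y_1^{c'}=\sum_s\Lambda_{p_{\bar{\mathbb{F}}},s_{\bar{\mathbb{F}}}}(\delta,c';s)\,y_1^{c'+s}y_2^{\delta-s}.
\]
The key observation is that after these substitutions each term is
\[
x_1^{a+\alpha+r}x_2^{b-r+\beta}\,y_1^{\gamma+c'+s}y_2^{\delta-s+d'},
\]
which is already a PBW monomial. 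No further interaction between the $x$-block and the $y$-block occurs, because all $x$-letters now stand to the left of all $y$-letters, and adjacent powers of the same letter simply merge. The summation ranges are finite, since $\Lambda$ vanishes for $r>b$ or $s>\delta$ and $\Theta$ has finite support.

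Finally, I read off the coefficient of $x_1^Ax_2^By_1^Cy_2^D$, which yields exactly \eqref{EquationProductConstantsExplicitFormula} with the Kronecker deltas recording the exponent bookkeeping. Uniqueness of the coefficients follows from the PBW basis.

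The main point requiring care, though it is not deep, is the ordering of the three steps. One must check that normalizing the mixed middle block first produces an $x$-part $x_1^\alpha x_2^\beta$ that interacts only with the trailing $x_2^b$ of $M$, and a $y$-part $y_1^\gamma y_2^\delta$ that interacts only with the leading $y_1^{c'}$ of $N$. This holds because the left factor $x_1^a$ commutes past nothing, and $x_2^\beta$ already lies to the right of the new $x_1$-powers.
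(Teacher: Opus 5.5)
Your proposal is correct and follows the paper's proof essentially step for step. Both arguments use the factorization $MN=x_1^ax_2^b\,(y_1^cy_2^dx_1^{a'}x_2^{b'})\,y_1^{c'}y_2^{d'}$, normalize the middle block with $\Theta_{\bar{\mathbb{F}}}$, resolve the two junctions $x_2^bx_1^\alpha$ and $y_2^\delta y_1^{c'}$ with Proposition \ref{PropositionInternalNormalOrdering}, and then collect coefficients using the PBW basis.
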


\begin{proof}
We have
\[
MN
=
x_1^ax_2^b
\left(
y_1^cy_2^dx_1^{a'}x_2^{b'}
\right)
y_1^{c'}y_2^{d'}.
\]
The middle factor is normalized by Proposition \ref{PropositionMixedBlockExplicit}. Thus it is enough to normalize
\[
x_1^ax_2^b x_1^\alpha x_2^\beta
\]
and
\[
y_1^\gamma y_2^\delta y_1^{c'}y_2^{d'}.
\]
The first one is governed by \eqref{EquationInternalXFamily}, namely
\[
x_1^ax_2^b x_1^\alpha x_2^\beta
=
\sum_{r\ge0}
\Lambda_{q_{\bar{\mathbb{F}}},r_{\bar{\mathbb{F}}}}(b,\alpha;r)
x_1^{a+\alpha+r}x_2^{b-r+\beta}.
\]
The second one is governed by \eqref{EquationInternalYFamily}, namely
\[
y_1^\gamma y_2^\delta y_1^{c'}y_2^{d'}
=
\sum_{s\ge0}
\Lambda_{p_{\bar{\mathbb{F}}},s_{\bar{\mathbb{F}}}}(\delta,c';s)
y_1^{\gamma+c'+s}y_2^{\delta-s+d'}.
\]
Collecting coefficients gives \eqref{EquationProductConstantsExplicitFormula}.
\end{proof}

The preceding proposition gives the explicit multiplication table of the PBW basis. Therefore all powers of normal blocks are obtained by a direct recurrence.

\begin{proposition}\label{PropositionExplicitBlockPowersFamily}
Fix a family $\bar{\mathbb{F}}\in\{\mathbb{A},\ldots,\mathbb{Z}\}$ and let
\[
M_{a,b,c,d}:=x_1^ax_2^by_1^cy_2^d.
\]
For every $N\ge1$, there exist unique coefficients
\[
R_{\bar{\mathbb{F}},N}^{(a,b,c,d)}(A,B,C,D)\in\Bbbk
\]
such that
\begin{equation}\label{EquationExplicitBlockPowerFamily}
M_{a,b,c,d}^{N}
=
\sum_{A,B,C,D\ge0}
R_{\bar{\mathbb{F}},N}^{(a,b,c,d)}(A,B,C,D)
x_1^Ax_2^By_1^Cy_2^D.
\end{equation}
They satisfy
\[
R_{\bar{\mathbb{F}},1}^{(a,b,c,d)}(A,B,C,D)
=
\delta_{A,a}\delta_{B,b}\delta_{C,c}\delta_{D,d},
\]
and, for $N\ge1$,
\begin{align}
&R_{\bar{\mathbb{F}},N+1}^{(a,b,c,d)}(A,B,C,D)
\notag\\
&=
\sum_{A',B',C',D'\ge0}
R_{\bar{\mathbb{F}},N}^{(a,b,c,d)}(A',B',C',D')
\mathfrak{m}_{\bar{\mathbb{F}}}
\begin{bmatrix}
A',B';C',D'\\
a,b;c,d\\
A,B;C,D
\end{bmatrix}.
\label{EquationExplicitBlockPowerRecursionFamily}
\end{align}
\end{proposition}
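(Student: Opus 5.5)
The plan is to argue by induction on $N$, using Proposition \ref{PropositionPBWBasis} for existence and uniqueness and Proposition \ref{PropositionExplicitProductConstants} for the recursion.

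\emph{Uniqueness.} For each $N$, $M_{a,b,c,d}^N$ is a well-defined element of $B$. By Proposition \ref{PropositionPBWBasis}, the set $\mathcal{B}$ is a $\Bbbk$-basis. Hence the coefficients in \eqref{EquationExplicitBlockPowerFamily}, if they exist, are unique; they are the normal ordering coefficients $C_{A,B,C,D}(M_{a,b,c,d}^N)$ of Definition \ref{DefinitionNormalForm}. Existence also follows from Proposition \ref{PropositionPBWBasis}, but the recursion will give it directly.

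\emph{Induction.} For $N=1$, the block $M_{a,b,c,d}$ is already a normal monomial, so uniqueness of the expansion gives $R_{\bar{\mathbb{F}},1}^{(a,b,c,d)}(A,B,C,D)=\delta_{A,a}\delta_{B,b}\delta_{C,c}\delta_{D,d}$. For the inductive step, write $M^{N+1}=M^N\cdot M$ and substitute the expansion \eqref{EquationExplicitBlockPowerFamily} for $M^N$. By bilinearity of multiplication,
\[
M^{N+1}=\sum_{A',B',C',D'}R_{\bar{\mathbb{F}},N}^{(a,b,c,d)}(A',B',C',D')\,x_1^{A'}x_2^{B'}y_1^{C'}y_2^{D'}\,x_1^ax_2^by_1^cy_2^d .
\]
Each product of two normal monomials is expanded by Proposition \ref{PropositionExplicitProductConstants}, with $(a,b;c,d)$ replaced by $(A',B';C',D')$ and $(a',b';c',d')$ replaced by $(a,b;c,d)$. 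This yields the structure constants $\mathfrak{m}_{\bar{\mathbb{F}}}$ in exactly the index arrangement of \eqref{EquationExplicitBlockPowerRecursionFamily}. Collecting the coefficient of $x_1^Ax_2^By_1^Cy_2^D$ and invoking uniqueness gives the recursion.

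\emph{Finiteness.} The only point needing care is that all sums involved are finite. I would settle this with a degree argument. Every defining relation is homogeneous of degree two and preserves both the $x$-degree and the $y$-degree, since the mixed relations send $y_ix_s$ to combinations of $x_ty_j$. So $B$ is bigraded. Consequently $R_N$ is supported on $A+B=N(a+b)$ and $C+D=N(c+d)$, and $\mathfrak{m}$ vanishes unless $A+B=A'+B'+a+b$ and $C+D=C'+D'+c+d$. Both sums therefore range over finitely many indices, and the proof is complete.
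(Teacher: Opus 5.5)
Your proposal is correct and follows the paper's own argument: write $M^{N+1}=M^N\cdot M$, expand $M^N$ in the PBW basis, apply Proposition~\ref{PropositionExplicitProductConstants} with $(A',B';C',D')$ in the left slot and $(a,b;c,d)$ in the right slot, and collect coefficients using PBW uniqueness. Your bigrading argument for finiteness is a correct extra detail; finiteness is in any case automatic, since $R_{\bar{\mathbb{F}},N}^{(a,b,c,d)}$ has finite support as the coefficient family of a PBW expansion.
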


\begin{proof}
This follows by multiplying the PBW expansion of $M_{a,b,c,d}^{N}$ by $M_{a,b,c,d}$ on the right and applying Proposition \ref{PropositionExplicitProductConstants}.
\end{proof}

\begin{corollary}\label{CorollaryFullNormalOrderingFamily}
For each family $\bar{\mathbb{F}}\in\{\mathbb{A},\ldots,\mathbb{Z}\}$, the following normal ordering problems are completely determined by the explicit coefficient arrays above:
\begin{enumerate}
    \item[\rm (1)] the internal words $x_2^nx_1^m$ and $y_2^ny_1^m$, via \eqref{EquationInternalXFamily} and \eqref{EquationInternalYFamily};
    \item[\rm (2)] the mixed one-letter words $y_ix_1^ax_2^b$, via \eqref{EquationSingleMixedExplicitExpansion} and \eqref{EquationSingleMixedExplicitRecursion};
    \item[\rm (3)] the mixed block words $y_1^cy_2^dx_1^ax_2^b$, via \eqref{EquationThetaExplicitExpansion} and \eqref{EquationThetaExplicitRecursion};
    \item[\rm (4)] the product of arbitrary normal monomials, via \eqref{EquationProductConstantExpansionExplicit} and \eqref{EquationProductConstantsExplicitFormula};
    \item[\rm (5)] the powers of normal blocks $(x_1^ax_2^by_1^cy_2^d)^N$, via \eqref{EquationExplicitBlockPowerFamily} and \eqref{EquationExplicitBlockPowerRecursionFamily}.
\end{enumerate}
\end{corollary}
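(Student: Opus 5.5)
The corollary is a bookkeeping statement, so I would prove it by checking, item by item, that each normal ordering problem is expressed through coefficient arrays that are either given in closed form or produced by a well-founded recursion whose inputs are explicit. All of these expansions are expansions in the basis $\mathcal{B}$ of Proposition \ref{PropositionPBWBasis}. So the normal forms are unique, and it is enough to show that the arrays exist and are computable from the family data: the internal parameters in Definition \ref{DefinitionFamilyInternalParameters} and the kernels $\mathscr{K}_{i,s}^{\bar{\mathbb{F}}}$.

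Item (1) follows at once from Proposition \ref{PropositionInternalNormalOrdering}, specialized to $(q_{\bar{\mathbb{F}}},r_{\bar{\mathbb{F}}})$ and $(p_{\bar{\mathbb{F}}},s_{\bar{\mathbb{F}}})$. This gives \eqref{EquationInternalXFamily} and \eqref{EquationInternalYFamily}, with $\Lambda$ in closed form. As a consequence, $\Omega_{\bar{\mathbb{F}}}$ and $\Psi_{\bar{\mathbb{F}}}$ in Definition \ref{DefinitionOmegaPsiFamily} are explicit finite sums.

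For item (2), I would use Proposition \ref{PropositionSingleMixedExplicit}. The recursion \eqref{EquationSingleMixedExplicitRecursion} lowers $a+b$ by one. Its inputs are the explicit kernels and $\Omega_{\bar{\mathbb{F}}}$, and the sums in it are finite: $W^{(a',b')}$ vanishes outside the degree-$(a'+b')$ range because the relations are homogeneous. Hence induction on $a+b$ from the initial condition determines every $W$.

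For item (3), I would apply Proposition \ref{PropositionMixedBlockExplicit} and run an induction on $c+d$, with $a,b$ arbitrary. The right-hand side of \eqref{EquationThetaExplicitRecursion} only uses three ingredients: the array $W$ from (2), the array $\Theta$ at $y$-degree $c+d-1$ (evaluated at the new $x$-exponents $(\mu,\nu)$), and $\Psi$. Items (4) and (5) then follow in the same way. The product formula \eqref{EquationProductConstantsExplicitFormula} is a finite combination of $\Theta$ and $\Lambda$. The recursion \eqref{EquationExplicitBlockPowerRecursionFamily}, by induction on $N$, builds the powers from $R_1$ and $\mathfrak{m}_{\bar{\mathbb{F}}}$.

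The only point that needs real care is well-foundedness in (3). The $x$-exponents $(\mu,\nu)$ change inside the recursion, so the induction has to be on $c+d$ alone, quantified over all $(a,b)$. I would check that the hypothesis is stated for all $x$-exponents at once. Homogeneity, which gives $\mu+\nu=a+b$, keeps every sum finite.
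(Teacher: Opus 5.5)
Your proposal is correct and takes essentially the paper's approach: the paper gives no separate proof of this corollary and treats it as a direct compilation of Propositions~\ref{PropositionInternalNormalOrdering}, \ref{PropositionSingleMixedExplicit}, \ref{PropositionMixedBlockExplicit}, \ref{PropositionExplicitProductConstants} and \ref{PropositionExplicitBlockPowersFamily}, so your item-by-item reduction to those results is the same argument. You also make explicit what the paper leaves implicit: the induction for $\Theta_{\bar{\mathbb{F}}}$ must run on $c+d$ uniformly over all $(a,b)$, and homogeneity (together with PBW uniqueness) keeps every recursive sum finite.
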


\begin{remark}\label{RemarkHowToReadFamilies}
For a concrete family, no abstract coefficient $a_{ijst}$ remains. One reads the four explicit sets
\[
\mathscr{K}_{1,1}^{\bar{\mathbb{F}}},\
\mathscr{K}_{1,2}^{\bar{\mathbb{F}}},\
\mathscr{K}_{2,1}^{\bar{\mathbb{F}}},\
\mathscr{K}_{2,2}^{\bar{\mathbb{F}}}
\]
from the family-wise table above and inserts them into \eqref{EquationSingleMixedExplicitRecursion}. This gives the same type of explicit coefficient recursion as the arrays $W,U,V,R$ used in the normal ordering theory of three-dimensional skew polynomial rings, but now in the four-generator double Ore setting.
\end{remark}

Finally, we record a completely explicit noncommutative multinomial recurrence. It will be useful whenever one wants to compute expressions such as $(x_1+x_2)^N$, $(y_1+y_2)^N$, $(x_i+y_j)^N$ or $(x_1+x_2+y_1+y_2)^N$.

\begin{proposition}\label{PropositionExplicitPolynomialPowers}
Fix a family $\bar{\mathbb{F}}\in\{\mathbb{A},\ldots,\mathbb{Z}\}$. Let
\[
L=
\lambda_1x_1+\lambda_2x_2+\mu_1y_1+\mu_2y_2,
\qquad
\lambda_1,\lambda_2,\mu_1,\mu_2\in\Bbbk.
\]
For every $N\ge0$, there exist unique coefficients
\[
E_{\bar{\mathbb{F}},N}^{L}(A,B,C,D)\in\Bbbk
\]
such that
\[
L^N
=
\sum_{A,B,C,D\ge0}
E_{\bar{\mathbb{F}},N}^{L}(A,B,C,D)
x_1^Ax_2^By_1^Cy_2^D.
\]
They are determined by
\[
E_{\bar{\mathbb{F}},0}^{L}(0,0,0,0)=1,
\qquad
E_{\bar{\mathbb{F}},0}^{L}(A,B,C,D)=0
\quad\text{if }(A,B,C,D)\neq(0,0,0,0),
\]
and by the recurrence
\begin{align}
&E_{\bar{\mathbb{F}},N+1}^{L}(A,B,C,D)
\notag\\
&=
\lambda_1
\sum_{A',B',C',D'\ge0}
E_{\bar{\mathbb{F}},N}^{L}(A',B',C',D')
\mathfrak{m}_{\bar{\mathbb{F}}}
\begin{bmatrix}
A',B';C',D'\\
1,0;0,0\\
A,B;C,D
\end{bmatrix}
\notag\\
&\quad+
\lambda_2
\sum_{A',B',C',D'\ge0}
E_{\bar{\mathbb{F}},N}^{L}(A',B',C',D')
\mathfrak{m}_{\bar{\mathbb{F}}}
\begin{bmatrix}
A',B';C',D'\\
0,1;0,0\\
A,B;C,D
\end{bmatrix}
\notag\\
&\quad+
\mu_1
\sum_{A',B',C',D'\ge0}
E_{\bar{\mathbb{F}},N}^{L}(A',B',C',D')
\mathfrak{m}_{\bar{\mathbb{F}}}
\begin{bmatrix}
A',B';C',D'\\
0,0;1,0\\
A,B;C,D
\end{bmatrix}
\notag\\
&\quad+
\mu_2
\sum_{A',B',C',D'\ge0}
E_{\bar{\mathbb{F}},N}^{L}(A',B',C',D')
\mathfrak{m}_{\bar{\mathbb{F}}}
\begin{bmatrix}
A',B';C',D'\\
0,0;0,1\\
A,B;C,D
\end{bmatrix}.
\label{EquationExplicitLinearPowerRecursion}
\end{align}
\end{proposition}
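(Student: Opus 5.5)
The plan is to argue by induction on $N$, in the same way as Proposition \ref{PropositionExplicitBlockPowersFamily}, with the four generators viewed as the degree-one normal monomials. First, existence and uniqueness of the coefficients $E_{\bar{\mathbb{F}},N}^{L}(A,B,C,D)$ come directly from Proposition \ref{PropositionPBWBasis}. Since $L^N$ is an element of $B$, it has a unique finite expansion in the basis $\mathcal{B}$, and we define $E_{\bar{\mathbb{F}},N}^{L}(A,B,C,D)$ to be its coordinates. For $N=0$ we have $L^0=1=x_1^0x_2^0y_1^0y_2^0$, which gives the stated initial values.

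For the inductive step we write $L^{N+1}=L^N\cdot L$. We substitute the PBW expansion of $L^N$ and expand $L$ by linearity:
\[
L^{N+1}=\sum_{A',B',C',D'}E_{\bar{\mathbb{F}},N}^{L}(A',B',C',D')\,x_1^{A'}x_2^{B'}y_1^{C'}y_2^{D'}\bigl(\lambda_1x_1+\lambda_2x_2+\mu_1y_1+\mu_2y_2\bigr).
\]
The key observation is that each generator is itself a normal monomial:
\[
x_1=M_{1,0,0,0},\qquad x_2=M_{0,1,0,0},\qquad y_1=M_{0,0,1,0},\qquad y_2=M_{0,0,0,1}.
\]
Each of the four resulting products is therefore a product of two normal monomials. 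Proposition \ref{PropositionExplicitProductConstants} rewrites each of them in normal form with constants $\mathfrak{m}_{\bar{\mathbb{F}}}$, with second row $(1,0;0,0)$, $(0,1;0,0)$, $(0,0;1,0)$ and $(0,0;0,1)$ respectively. Collecting the coefficient of $x_1^Ax_2^By_1^Cy_2^D$ and invoking PBW uniqueness once more gives \eqref{EquationExplicitLinearPowerRecursion}.

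The only point needing care is that all sums are finite, so the interchange of summations is legitimate. This follows because only finitely many $E_{\bar{\mathbb{F}},N}^{L}$ are nonzero by induction, and for fixed inputs only finitely many product constants are nonzero. The second fact holds because the relations are homogeneous: total degree is preserved, so $A+B+C+D=A'+B'+C'+D'+1$. There is no genuine obstacle, since the substantive work was done in Proposition \ref{PropositionExplicitProductConstants}. It is perhaps worth remarking that for the $y$-generators the product constants simplify, because $\Theta_{\bar{\mathbb{F}}}(0,0;C',D';\dots)$ is a pure delta and only the internal $\Lambda_{p,s}$ factor survives. This simplification is not needed for the proof.
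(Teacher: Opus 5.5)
Your proof is correct and takes the same approach as the paper. You expand $L^{N+1}=L^N L$, view each generator as a degree-one normal monomial, apply Proposition \ref{PropositionExplicitProductConstants} to the four right multiplications, and use PBW uniqueness (Proposition \ref{PropositionPBWBasis}) for existence, uniqueness and the coefficient comparison. Your finiteness check and your remark on the $y$-generators are accurate additions that the paper does not spell out.
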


\begin{proof}
The claim follows by expanding
\[
L^{N+1}=L^N(\lambda_1x_1+\lambda_2x_2+\mu_1y_1+\mu_2y_2)
\]
and applying Proposition \ref{PropositionExplicitProductConstants} to each of the four right multiplications.
\end{proof}

\begin{remark}\label{RemarkComputationalImplementationNormalOrdering}
The recursions above are finite and completely explicit. Nevertheless, for families such as $\mathbb{C},\mathbb{F},\mathbb{I},\mathbb{S},\mathbb{T}$ and $\mathbb{U}$, the expansions grow very quickly because all four mixed relations contain several summands. In those cases, the formulas should be understood as computable coefficient recursions rather than as expressions meant to be expanded by hand. This is precisely the situation where the \textsc{SageMath} implementation is useful: it evaluates the same recursive PBW reductions and verifies the resulting normal forms.
\end{remark}

\section{Computational implementation}\label{SubsectionComputationalImplementation}

Some of the PBW reductions appearing in this paper were verified with the aid of \textsc{SageMath} \cite{SageMath2026}. The implementation uses the fixed order
$x_1\prec x_2\prec y_1\prec y_2$ and represents noncommutative polynomials as finite dictionaries whose keys are words in the alphabet $\{x_1,x_2,y_1,y_2\}$ and whose values are coefficients in the corresponding parameter field. The rewriting rules are precisely those induced by the defining relations
$$
x_2x_1=q_{12}x_1x_2+q_{11}x_1^2,\
y_2y_1=p_{12}y_1y_2+p_{11}y_1^2,
$$
together with the four mixed relations
$$
y_ix_s=\sum_{j=1}^2\sum_{t=1}^2 a_{ijst}x_ty_j,
\ i,s\in\{1,2\}.
$$
The code iteratively replaces forbidden adjacent pairs by their ordered expressions and computes PBW normal forms, products, powers, commutators, overlap checks, and centrality systems in fixed bidegrees.

\section{Future work}\label{Futurework}

The recursive normal ordering theory developed here opens several natural directions. First, it would be interesting to extract closed formulas for the mixed coefficients
\[
W_{\bar{\mathbb{F}},i}^{(a,b)}(\alpha,\beta;\ell)
\text{ and }
\Theta_{\bar{\mathbb{F}}}(a,b;c,d;\alpha,\beta;\gamma,\delta)
\]
in the sparsest families, for instance $\mathbb{A},\mathbb{B},\mathbb{D},\mathbb{H},\mathbb{K},\mathbb{L},\mathbb{X}$ and $\mathbb{Z}$. In these cases the crossing kernels have few summands, so one may expect more transparent lattice-path models, generating functions, and perhaps closed triangular arrays generalizing the Lah--Whitney coefficients found in the internal Jordan relations.

A second direction is the systematic study of roots of unity. When the parameters $p$ or $q$ are roots of unity, the quantum factors in the internal normal ordering formulas may force cancellations and periodicities in the coefficient arrays. Such phenomena are closely related to central elements and to finite generation over central subalgebras. Therefore, the present normal ordering formulas could be used as a combinatorial tool for future computations of centers, discriminants and automorphism invariants of double extension regular algebras of type $(14641)$.

A third direction is to extend the method to non-trimmed double Ore extensions. The presence of derivation terms and tails introduces lower-degree contributions in the rewriting rules, and one should then expect inhomogeneous normal ordering coefficients closer to the classical Stirling numbers of the Weyl algebra. This would provide a bridge between the homogeneous Lah--Whitney arrays appearing in the present paper and the ordinary Stirling-type coefficients coming from affine commutation relations.

Finally, the algorithms used here can be developed into a broader computational package for PBW normal ordering in double extensions and related skew PBW extensions. Such a package would make it possible to search automatically for closed forms, test conjectural recurrences, compute commutators in prescribed bidegrees, and explore normal ordering phenomena in other regular algebras with structured rewriting systems.
\section*{Code availability}

The \textsc{SageMath} code used to support the symbolic PBW computations in this work is included as an ancillary file, \texttt{double\_ore\_pbw.sage}, accompanying the arXiv version of this manuscript.

\end{document}